\newtheorem{thm}{Theorem}[section]
\newtheorem{prop}[thm]{Proposition}
\newtheorem{lem}[thm]{Lemma}
\newtheorem{cor}[thm]{Corollary}
\theoremstyle{definition}
\newtheorem{defn}[thm]{Definition} 
\def\wtilde{\widetilde}
\def\inc{\mathbin{\widetilde\in}}
\def\incs{\mathbin{\widetilde\subset}}
\def\cIm{\operatorname{cIm}}
\def\cGr{\mathsf{cGr}}
\def\Sets{\mathsf{Sets}}
\def\cKer{\textbf{cKer}}
\def\Ker{\operatorname{Ker}}
\def\cKer{\operatorname{cKer}}
\def\NSCM/(A,B,\mu){\mathsf{NSCM/(A,B,\mu)}}
\def\NSGGd/G{\mathsf{NSGGd/G}}
\def\epsilon{\varepsilon}
\def\wtilde{\widetilde}
\numberwithin{equation}{section}
\title{\bf From cssc-crossed modules to categorical groups}
\author[1]{Tamar Datuashvili\thanks{\textbf{Corresponding author:} Tamar Datuashvili (e-mail: tamar.datu@gmail.com)}}
\affil[1]{\small{A. Razmadze Mathematical Institute of I. Javakhishvili Tbilisi State University, 2, M. Aleksidze II Lane, 0193, Tbilisi, GEORGIA}}
\author[2]{Osman Mucuk\thanks{O. Mucuk (e-mail : mucuk@erciyes.edu.tr)}}
\affil[2]{\small{Department of Mathematics, Erciyes University, 38039, Kayseri, TURKEY}}
\author[2]{Nazmiye Alemdar\thanks{N. Alemdar (e-mail : nakari@erciyes.edu.tr)}}
\author[3]{Tun\c{c}ar \c{S}ahan\thanks{T. \c{S}ahan (e-mail : tuncarsahan@gmail.com)}}
\affil[3]{\small{Department of Mathematics, Aksaray University, 68100, Aksaray, TURKEY}}
\let\@@citation@@=\citation
\renewcommand{\citation}[1]{\@@citation@@{#1}%
	\@for\@tempa:=#1\do{\@ifundefined{cit@\@tempa}%
		{\global\@namedef{cit@\@tempa}{}}{}}%
}
\def\@lbibitem[#1]#2#3\par{%
	\@ifundefined{cit@#2}{}{\item[\@biblabel{#1}\hfill]}%
	\if@filesw
	{\let\protect\noexpand
		\immediate
		\write\@auxout{\string\bibcite{#2}{#1}}}\fi\ignorespaces
	\@ifundefined{cit@#2}{}{#3}}
\def\@bibitem#1#2\par{%
	\@ifundefined{cit@#1}{}{\item}%
	\if@filesw \immediate\write\@auxout
	{\string\bibcite{#1}{\the\value{\@listctr}}}\fi\ignorespaces
	\@ifundefined{cit@#1}{}{#2}}
\begin{document}
\date{}
\maketitle

\begin{abstract}
For any cssc-crossed module a category is constructed, equipped with a structure and proved that this is a coherent categorical group. Together with a result of the previous paper, where to any categorical group the cssc-crossed module is associated, this construction will enable us to prove an equivalence between the categories of categorical groups and of cssc-crossed modules in the sequel to this paper.
\end{abstract}	
	
\section{Introduction} 
\label{intro}
	
Our aim was to obtain for categorical groups an analogous description in terms of certain crossed module type objects as we have for $\mathcal{G}$-groupoids  obtained by Brown and Spencer \cite{BS1}, which are strict categorical groups, or equivalently, group-groupoids or internal categories in the category of groups. By a categorical group we mean a coherent 2-group in the sense of Baez and Lauda \cite{baez-lauda-2-groups}. 
It is important to note, that it is a well known fact that a categorical group is equivalent to a strict categorical group \cite{baez-lauda-2-groups,JoyalStreet,Sinh 2}, but we do not have an equivalence  between the corresponding categories.
This idea brought us to a new notion of group up to congruence relation introduced in \cite{Dat2020}. In this way we came to the definition of c-group and the corresponding category. Then we defined actions in this category and introduced the notion of c-crossed module. Among this kind of objects we distinguished connected, strict  and special c-crossed modules denoted as cssc-crossed modules \cite{Dat2020}. We proved that any categorical group $\mathbb{C}$ gives rise to a cssc-crossed module. In the presented paper for any cssc-crossed module $\partial\colon M \rightarrow N$ we construct a categorical group $\mathbb{G}$. A set of objects of $\mathbb{G}$ is defined in the standard way, as it is in the case of group-groupoids. The definition of a set of arrows turned out to be complicated. We consider the product $N\times M$; then enlarge this set with special congruences from the c-crossed module $\partial\colon M \rightarrow N$. After this we introduce two types of identifications in the obtained set and define an operation of addition there, as well as the composition of arrows, the identity arrow of each object in $\mathbb{G}$ and the inverse arrow of each arrow. We prove that all these operations do not depend on the choice of representatives. After all we prove that the constructed category is a coherent categorical group.

We plan to show that there is an equivalence between the category of coherent categorical groups  and the category of cssc-crossed modules. We hope that this result will give a chance to consider for categorical groups problems analogous to those considered and solved in the cases of strict categorical groups in terms of group-groupoids and internal categories in \cite{BM1,Dat1,Dat2,Dat3,Dat4,Mucuk2015}.

\section{Preliminary results and definitions}

Since the paper is a continuation of the previous one, in this section as a preparation we summarise some preliminaries from \cite{Dat2020}.


\subsection{Categorical groups}

The following definition of a monoidal category goes back to Mac Lane \cite{Mac71}.

A \textbf{monoidal category} is a category $\mathsf{C}=(C_0, C_1, d_0, d_1, i, m)$ equipped with a bifunctor $+\colon\mathsf{C}\times\mathsf{C}\rightarrow\mathsf{C}$ called the monoidal sum, an object $0$ called the zero object, and the family of three natural isomorphisms  $\alpha=\alpha_{x,y,z}\colon (x+y)+z\stackrel{\approx}{\rightarrow} x+(y+z)$, $\lambda_x\colon 0+x\stackrel{\approx}{\rightarrow} x$ and $\rho_x\colon x+0\stackrel{\approx}{\rightarrow} x$, for all $x,y,z\in C_0$, such that the following diagrams commute for all $x,y,z,t\in C_0$.
		\[\xymatrix{
			((x+y)+z)+t  \ar[d]_{\alpha+1} \ar[r]^-{\alpha} & (x+y)+(z+t)  \ar[r]^-{\alpha} & x+(y+(z+t)) \\
			(x+(y+z))+t  \ar[rr]_-\alpha & & x+((y+z)+t) \ar[u]_{1+\alpha} }\]
		\[\xymatrix{
			(x+0)+y  \ar[r]^-{\alpha} \ar[d]_-{\rho+1} &  x+(0+y)  \ar[d]^-{1+\lambda} \\
			x+y  \ar@{=}[r] & x+y }\]
Moreover, all diagrams involving $\alpha,\lambda$, and $\rho$ must commute. A monoidal groupoid is a monoidal category in which every morphism is invertible. 

In this definition we use the term monoidal sum and denote it as $+$, instead of monoidal product, used in the original definition, and write the operation additively.
	
It follows from the definition that $1_0+f\approx f+1_0 \approx f$, for any morphism $f$. A monoidal category is said to be strict if the natural isomorphisms $\alpha, \lambda$, and $\rho$ are identities.
	
For any two monoidal categories $\mathsf{C}=(C,+,0,\alpha,\lambda,\rho)$ and $\mathsf{C'}=(C',+',0',\alpha',\lambda',\rho')$, a functor $T:C\rightarrow C'$ satisfying $T(x+y)=Tx+'Ty$, $T(f+g)=Tf+'Tg, T0=0'$, $T\alpha_{x,y,z}=\alpha'_{Tx,Ty,Tz}$, $T\lambda_x=\lambda'_{Tx}$, $T\rho_x=\rho'_{Tx}$ for all objects $x,y,z$ and morphisms $f$ and $g$ is called a (strict) morphism of monoidal categories. \[T:(C,+,0,\alpha,\lambda,\rho)\rightarrow (C',+',0',\alpha',\lambda',\rho')\]

Let $x$ be an object in a monoidal category $\mathsf{C}$. If there is an object $y\in C_0$ such that $x+y\approx 0$ and $y+x\approx 0$ then $y$ is called an inverse of $x$. It is a well known fact that if any object has a one-sided inverse in a monoidal category, then any object is invertible \cite{baez-lauda-2-groups,JoyalStreet}.
	
A monoidal groupoid $\mathsf{C}=(C_0, C_1, d_0, d_1, i, m)$ is called a \textit{categorical group} if for every object $x\in C_0$ there is an object $-x\in C_0$ with a family of natural isomorphisms	$\epsilon_x\colon -x+x\approx 0$ and $\delta_x\colon x+(-x)\approx 0$ such that the following diagrams are commutative:
		\[\xymatrix@C=2.3pc{
			0 + x \ar[r]^<<<<<<{\delta^{-1}_x + 1} \ar[d]_{\lambda _x}
			& (x + (-x)) + x \ar[r]^<<<<<{\alpha_{x, -x ,x}}
			& x+( -x + x) \ar[d]^{1_x + \epsilon_x} \\
			x \ar[rr]_{\rho ^{-1}_x}
			& & x + 0  }
		\]
		
		\[
		\xymatrix@C2.3pc{
			-x + 0
			\ar[r]^<<<<<<{1 + \delta^{-1}_x}
			\ar[d]_{\rho _{-x}}
			& -x + (x + (-x))
			\ar[r]^{\alpha^{-1}_{-x, x, -x}}
			& (-x + x)+ (-x)
			\ar[d]^{\epsilon_x + 1_{-x}} \\
			-x
			\ar[rr]_{\lambda^{-1}_{-x}}
			&& 0 + (-x)   }
		\]
	
It is important and  a well-known fact that the definition of a categorical group implies that for any morphism $f:x\rightarrow x' \in C_1$ there is a morphism $-f\colon -x\rightarrow -x'$ with natural isomorphisms $-f+f\approx 0$ and $f+ (-f)\approx 0$, where the morphism $0$ is $1_0$ (see e.g. \cite{Sinh 1}).
The natural isomorphisms $\alpha,\lambda$, $\rho, \epsilon, \delta$, identity transformation $1_ \mathsf{C}\rightarrow 1_\mathsf{C}$, their compositions and sums are called \emph{special isomorphisms} in \cite{Dat2020}. A categorical group is said to be coherent if all diagrams involving special isomorphisms commute \cite{Laplaza,baez-lauda-2-groups}. The categorical group defined above is coherent. See \cite[Chapter VII Section 2]{Mac71} for coherence of monoidal categories.
	
The functorial properties of addition $+$ implies that in a categorical group we have $-1_x=1_{-x}$, for any $x\in C_0$. Since an isomorphism between morphisms $\theta :f\thickapprox g$ means that there exist isomorphisms $\theta_i:d_i(f)\rightarrow d_i(g), i=0,1$ with $\theta_1 f=g\theta_0$, the naturality property of special isomorphisms implies that there exist special isomorphisms between the morphisms in $C_1$. But if $\theta_i, i=0,1$ are special isomorphisms, it does not imply that $\theta$ is a special isomorphism; in this case $\theta$ is called a \emph{weak special isomorphism}. It is obvious that a special isomorphism between the morphisms in $C_1$ is a weak special isomorphism. Note that if $f\approx f'$ is a weak special isomorphism, then the coherence property implies that $f'$ is the unique morphism weakly specially isomorphic to $f$ with the same domain and codomain objects as $f'$.
	
In \cite{Dat2020}, we defined (strict) morphisms between categorical groups, which satisfy conditions of (strict) morphisms of monoidal categories. Note that this definition implies: $T(-x)=-{T(x)}$ and $T(-f)=-{T(f)}$, for any object $x$ and arrow $f$ in a categorical group. Categorical groups form a category with (strict) morphisms between them. 	For any categorical group $\mathsf{C}=(C_0, C_1, d_0, d_1, i, m)$ denote $\Ker d_0=\{f\in C_1\mid d_0(f)\approx 0\}$  and $\Ker d_1=\{f\in C_1\mid d_1(f)\approx 0\}$.
	
\begin{lem}\label{comm}
Let $\mathsf{C}=(C_0, C_1, d_0, d_1, i, m)$ be a categorical group. For any $f\in \Ker d_1$ and $g\in \Ker d_0$ we have a weak special isomorphism $f+g\approx g+f$.
\end{lem}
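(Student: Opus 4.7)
Write $f\colon x\to x'$ and $g\colon y\to y'$, so that the hypotheses provide special isomorphisms $\sigma\colon x'\to 0$ and $\tau\colon y\to 0$. By the convention recalled just before the lemma, to exhibit a weak special isomorphism $f+g\approx g+f$ I must produce special isomorphisms
\[
\theta_0\colon x+y\to y+x,\qquad \theta_1\colon x'+y'\to y'+x'
\]
satisfying $\theta_1\circ(f+g)=(g+f)\circ\theta_0$.

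The natural candidates are
\[
\theta_0 \;=\; (\tau^{-1}+1_x)\circ\lambda_x^{-1}\circ\rho_x\circ(1_x+\tau),
\]
\[
\theta_1 \;=\; (1_{y'}+\sigma^{-1})\circ\rho_{y'}^{-1}\circ\lambda_{y'}\circ(\sigma+1_{y'}).
\]
Both are composites of sums and compositions of the basic special isomorphisms $\sigma,\tau,\lambda,\rho$ with identity morphisms, and hence are themselves special isomorphisms.

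To verify that the square $\theta_1\circ(f+g)=(g+f)\circ\theta_0$ commutes, I would decompose $f+g$ and $g+f$ using the interchange law $(a+b)\circ(c+d)=(a\circ c)+(b\circ d)$ for the bifunctor $+$, and repeatedly apply the naturality of $\lambda$ and $\rho$ to move the unitors past the individual components. The computation then reduces to the classical Eckmann-Hilton identity for morphisms with strict endpoint $0$: applied to the renormalised morphisms $\sigma\circ f\colon x\to 0$ and $g\circ\tau^{-1}\colon 0\to y'$, both interchange-based expansions of the sum collapse, up to unitors, to the composite $g\circ\tau^{-1}\circ\sigma\circ f\colon x\to y'$. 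Reintroducing $\sigma^{-1}$ and $\tau^{-1}$ on the appropriate sides yields the target equality.

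The main obstacle is the unitor and associator bookkeeping: one must show that two long composites built from $\sigma,\tau,\lambda,\rho,\alpha$ and $f,g$ agree. Here coherence of the categorical group does the heavy lifting, since any diagram assembled only from special isomorphisms between a fixed pair of objects automatically commutes. Consequently, once interchange and naturality have been used to place $f$ and $g$ in corresponding positions on both sides of the equation, the remaining special-isomorphism pieces are forced to match, and the proof concludes.
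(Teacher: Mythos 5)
Your argument is essentially correct, and in fact the paper gives no proof to compare it against: Lemma~\ref{comm} is only recalled here from \cite{Dat2020} as a preliminary. Your choice of $\theta_0=(\tau^{-1}+1_x)\lambda_x^{-1}\rho_x(1_x+\tau)$ and $\theta_1=(1_{y'}+\sigma^{-1})\rho_{y'}^{-1}\lambda_{y'}(\sigma+1_{y'})$ does work: writing $\tilde f=\sigma f\colon x\to 0$ and $\tilde g=g\tau^{-1}\colon 0\to y'$, functoriality of $+$ gives $(\sigma+1_{y'})(f+g)=(\tilde f+\tilde g)(1_x+\tau)$ and $(g+f)(\tau^{-1}+1_x)=(1_{y'}+\sigma^{-1})(\tilde g+\tilde f)$, and naturality of $\lambda$ and $\rho$ together with $\lambda_0=\rho_0$ yield $\lambda_{y'}(\tilde f+\tilde g)=\tilde g\tilde f\rho_x$ and $\rho_{y'}(\tilde g+\tilde f)=\tilde g\tilde f\lambda_x$, so both sides of the required square reduce to $(1_{y'}+\sigma^{-1})\rho_{y'}^{-1}\tilde g\tilde f\rho_x(1_x+\tau)$. (Strictly speaking this is a unit-object reduction rather than Eckmann--Hilton, which concerns endomorphisms of $0$, but the computation you describe is exactly the right one.) The one point you pass over silently is your opening claim that the hypotheses ``provide \emph{special} isomorphisms'' $\sigma\colon x'\to 0$ and $\tau\colon y\to 0$: the paper defines $\Ker d_1$ and $\Ker d_0$ only by $d_1(f)\approx 0$ and $d_0(g)\approx 0$, i.e.\ by the existence of some isomorphism, and if $\sigma,\tau$ are not special then your $\theta_0,\theta_1$ are not special either and the conclusion ``weak special isomorphism'' does not follow. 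This stronger reading is evidently what the lemma intends (and holds in the intended application, where $f\in\textbf{Star}_{\mathsf C}0$ has $d_0(f)=0$ on the nose), but you should state it as an assumption rather than as something the hypotheses provide.
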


\subsection{Groups up to congruence relation}
	
Now we recall the definition of a group up to congruence relation or briefly a c-group. Let the pair $X_R$ denotes a set $X$ with an equivalence relation $R\subseteq X\times X$. These kind of objects form a category, denoted by $\wtilde{\Sets}$, where the morphisms are functions $f\colon X_R\rightarrow Y_S$, such that $f(x)\sim_S f(y)$, whenever $x\sim_R y$. 

Product in this category consists of the cartesian product of the sets and the usual product of the equivalence relations.
	
\begin{defn}\label{defcgr}
A \textit{c-group} is an object $G_R$ in $\wtilde{\Sets}$ with a morphism $\mathfrak{m}\in \wtilde{\Sets}((G\times G)_{R\times R},G_R)$, denoted by $\mathfrak{m}(a,b)=a+b$, for any $a,b\in G$, satisfying the following conditions:
%
\begin{enumerate}[label=(\roman{*}), leftmargin=1cm]
	\item\label{def:cgri} $a+(b+c)\sim_R(a+b)+c$, for all $a,b,c\in G$;
	\item\label{def:cgrii}  there exists an element $0\in G$ such that $a+0\sim_R a\sim_R 0+a$, for all $a\in G;$
	\item\label{def:cgriii} for each $a\in G$ there exists an element $-a$ such that  $a+(-a)\sim_R 0$ and $-a+a\sim_R0$.
\end{enumerate}
\end{defn}
	
In a c-group $G_R$, the given element $0\in G$ is called  {\em zero element} and for any $a\in G$ the given element $-a\in G$ is called the {\em inverse} of $a$. The congruences involved in the conditions \ref{def:cgri}--\ref{def:cgriii} of the definition, their compositions and sums are called \emph{special congruences}.
	
Every group $G$ can be considered as a c-group with $R=\{(a,a)\mid a\in G\}$, i.e. $R$ is the equality $(=)$ relation. See \cite{Dat2020} for the properties and examples of c-groups.
	
For any two c-groups $G_R$ and  $H_S$, a morphism $f\colon G_R\rightarrow H_S$ in $\wtilde{\Sets}$ is called a \textit{c-group morphism} if $f(a+b)=f(a)+f(b)$,  for any $a,b\in G$. It is easy to see that a morphism between c-groups preserves congruences between elements; moreover $f(0)\sim 0$ and $f(-a)\sim -f(a)$, for any $a\in G$. Hence, a morphism between c-groups carries special congruences to special congruences between pairs of elements.
	

A category with objects as c-groups, with morphisms as c-group morphisms, with obvious composition of morphisms and identity morphisms of each object will be denoted by $\cGr$.

For a c-group morphism $f\colon G_R\rightarrow H_S$, the subset $\cKer f=\{a\in G_R \mid f(a)\sim_S 0_H \}$ of $G_R$ is called the \textit{c-kernel} and the subset $\cIm f=\{b\in H_S \mid \exists a\in G_R,  f(a)\sim_S b\}$ of $H_S$ is said to be the \textit{c-image} of $f$.

Let $H$ be a subset of the underlying set $G$ of a c-group $G_R$ and let $S=R\cap(H\times H)$. In this case, it is easy to see that $S$ is an equivalence relation on $H$. If $H_S$ is a c-group with the operation induced from $G_R$ then $H_S$ is called a c-subgroup of $G_R$. Note that $\cKer f$ is a c-subgroup of $G_R$. In particular, $\cKer d_0$, for a categorical group $\mathsf{C}=(C_0, C_1, d_0, d_1, i, m)$, is a c-subgroup of $C_1$ with the congruence relation on $\cKer d_0$ induced by the isomorphisms in $C_1$.

Let $G_R$ be a c-group and let $H$ be a subset of $G$. If for an element $a\in G$ there exists an element $b\in H$ such that $a\sim_R b$ then we write $a\inc H$. If $H$ and $H'$ are two subsets of $G_R$, then we write $H\incs H'$ if for any $h\in H$ we have $h \inc H'$. If $H\incs H'$ and $H'\incs H$, then we write $H\sim H'$.
	
\begin{defn}
Let $H_S$ be a c-subgroup of a c-group $G_R$. Then $H_S$ is called
\begin{enumerate}[label=(\roman{*}), leftmargin=1cm]
	\item \textit{normal} if $g+h-g\inc H_S$, for any $h\in H_S$ and $g\in G$; 
	\item \textit{perfect} if $g\inc H$ implies $g\in H$, for any $g\in G$.
\end{enumerate}
\end{defn}
	
One can see that, for a c-group morphism $f\colon G_R\rightarrow H_S$, $\cKer f$ is a perfect and normal c-subgroup of $G_R$, and $\cIm f$ is a perfect c-subgroup of $H_S$.
	
\begin{defn}
A c-group $G_R$ is called \emph{connected} if $g\sim g'$ for any $ g, g'\in G$.
\end{defn}
	
%

\subsection{Actions and crossed modules in $\cGr$}

From now on we omit congruence relation symbols for c-groups $A$ and $B$ if no confusion arise.

\begin{defn}
An (left) \textit{action} of a c-group $B$ on a c-group $A$ is a function $B\times A\rightarrow A$ denoted by $(b,a)\mapsto b\cdot a$ which satisfies the following conditions
\begin{enumerate}[label=(\roman{*}), leftmargin=1cm]
	\item $b\cdot(a+a_1)\sim (b\cdot a)+(b\cdot a_1)$,
	\item $(b+b_1)\cdot a\sim b\cdot(b_1\cdot a)$,
	\item $0\cdot a\sim a$,
	\item If $a\sim a_1$ and $b\sim b_1$ then $b\cdot a \sim b_1\cdot a_1$,
\end{enumerate}
for $a,a_1\in A$ and $b,b_1\in B$.
\end{defn}

A semi-direct product is defined as follow: Let $A, B \in \cGr$ and suppose that $ B $ acts on $ A $ satisfying the conditions (i)--(iv). Then the product $B \times A$ in  $\cGr$ becomes a c-group with the operation $(b',a')+(b,a)=(b'+b, a'+b'\cdot a)$ for any $b, b'\in B, a, a'\in A$ where the congruence relation is the product relation, i.e. $(b,a)\sim (b',a')$ if and only if $b\sim b'$ and $a\sim a'$. Here, $(0,0)$ is a zero element in $B \times A$ and $(-b, -b\cdot (- a))$ is the opposite element of the pair $(b,a)\in B \times A$. This c-group is called the semi-direct product $B \ltimes A$ in $\cGr$.
	
For any morphism $f:D\rightarrow D'$ in $\cGr$, $f$ is called an isomorphism up to congruence relation or briefly a c-isomorphism if there is a morphism $f':D'\rightarrow D$, such that $ff'\sim 1_{D'}$ and $f'f\sim 1_D$. This kind of an isomorphism is denoted by $\tilde{\approx}$, i.e. $f\colon D \mathrel{\tilde{\approx}} D'$.
	
For a semi-direct product object $B \ltimes A$ in $\cGr$, we have a natural projection $p': B \ltimes A \rightarrow B$. In this case, there is a c-isomorphism $\cKer p' \mathrel{\tilde{\approx}} A$ which need not to be an isomorphism as in the case of groups.

From a categorical group $\mathsf{C}=(C_0, C_1, d_0, d_1, i, m)$, we obtain a split extension
\begin{equation} \label{extension}
	\xymatrix{0\ar[r]&\cKer d_0\ar[r]^-{j}&C_1\ar[r]_-{d_0}&C_0\ar[r] \ar@/_/[l]_-{i}&0}
\end{equation}
where $i\colon C_0\rightarrow C_1$ is a section of $d_0$. Thus we define an action of $C_0$ on $\cKer d_0$ by
\[ \begin{array}{rcl}
	C_0\times \cKer d_0& \longrightarrow & \cKer d_0,\\
	(r,c)              & \longmapsto     & r\cdot c=i(r)+(j(c)-i(r)).
\end{array}\]

\begin{prop}
	The action of $C_0$ on $\cKer d_0$ satisfies the conditions for an action in $\cGr$.
\end{prop}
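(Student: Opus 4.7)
The plan is to verify each of the four action axioms for the formula $r \cdot c = i(r) + (j(c) - i(r))$, working throughout inside $C_1$ and using that the congruence on $\cKer d_0$ is the one induced from the isomorphism relation in $C_1$. The key inputs are: bifunctoriality of $+\colon \mathsf{C}\times\mathsf{C}\to\mathsf{C}$, which forces $i(r+s) = 1_{r+s} = 1_r + 1_s = i(r) + i(s)$ since a bifunctor preserves identities; the existence, for every $f \in C_1$, of a morphism $-f$ with $(-f) + f \approx 0$ and $f + (-f) \approx 0$ functorially in $f$; the special isomorphisms $\alpha, \lambda, \rho, \epsilon, \delta$ of the categorical group; and the derived coherence isomorphism $-(x+y) \approx -y + (-x)$.

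Condition (iv) is immediate from functoriality: an isomorphism $r \to r_1$ in $C_0$ yields $i(r) \approx i(r_1)$, an isomorphism $c \to c_1$ in $\cKer d_0$ yields $j(c) \approx j(c_1)$, taking the inverse morphism preserves isomorphism, and applying the bifunctor $+$ yields $i(r) + j(c) + (-i(r)) \approx i(r_1) + j(c_1) + (-i(r_1))$. Condition (iii) reduces to $0 \cdot c = i(0) + j(c) + (-i(0)) = 0 + j(c) + (-0) \approx j(c) \approx c$, using $\lambda$, $\rho$, and $-0 \approx 0$.

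For (i), I would expand $r \cdot (c + c_1) = i(r) + \bigl(j(c) + j(c_1)\bigr) + (-i(r))$, insert the special congruence $0 \approx (-i(r)) + i(r)$ between $j(c)$ and $j(c_1)$ using $\epsilon_{i(r)}^{-1}$ and $\lambda$, and reassociate via $\alpha$ to obtain $(i(r) + j(c) + (-i(r))) + (i(r) + j(c_1) + (-i(r))) = (r \cdot c) + (r \cdot c_1)$. For (ii), I start from
\[
(r + r_1) \cdot c = \bigl(i(r) + i(r_1)\bigr) + j(c) + \bigl(-(i(r) + i(r_1))\bigr),
\]
apply $-(i(r) + i(r_1)) \approx -i(r_1) + (-i(r))$, and reassociate to reach $i(r) + \bigl(i(r_1) + j(c) + (-i(r_1))\bigr) + (-i(r)) = i(r) + j(r_1 \cdot c) + (-i(r)) = r \cdot (r_1 \cdot c)$.

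The main obstacle is bookkeeping: each congruence used must be exhibited as a composition or sum of the distinguished isomorphisms $\alpha, \lambda, \rho, \epsilon, \delta$ together with their functorial consequences under $+$ and inversion, so that the resulting relations on $\cKer d_0$ are genuinely of the special type allowed by the c-group structure. Coherence of the categorical group guarantees that competing rewrite chains give the same congruence, eliminating any dependence on the choice of parenthesizations or the order in which the coherence cells are applied.
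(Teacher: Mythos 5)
Your proof is correct: the paper states this proposition without proof (it is recalled from the earlier paper \cite{Dat2020}), and your direct verification of the four axioms for $r\cdot c=i(r)+(j(c)-i(r))$ — using $i(r+s)=1_r+1_s$, the coherence isomorphism $-(x+y)\approx -y+(-x)$, insertion of $\epsilon_{i(r)}^{-1}$ for distributivity, and functoriality of $+$ and of inversion for axiom (iv) — is exactly the expected argument, with every congruence exhibited as a composite of special isomorphisms so that it restricts to the induced relation on $\cKer d_0$. No gaps worth flagging.
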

	
\begin{defn}
Let $G$ and $H$ be two c-groups, let $\partial\colon G\rightarrow H$ be a morphism of c-groups and let $H$ act on $G$. We call $(G,H,\partial)$ a \emph{c-crossed module} if the following conditions are satisfied:
\begin{enumerate}[label=(\roman{*}), leftmargin=1cm]
	\item $\partial(b\cdot a)= b+(\partial(a)-b)$,
	\item $\partial(a)\cdot a_1\sim a+(a_1-a)$,
\end{enumerate}
for $a,a_1\in G$ and $b\in H$.
\end{defn}
	
Let $(G,H,\partial)$ and $(G',H',\partial')$ be two c-crossed modules. A \emph{c-crossed module morphism} is a pair of morphisms $\langle f,g\rangle\colon(G,H,\partial)\rightarrow(G',H',\partial')$ such that $g\partial=\partial'f$ and $f(b\cdot a)= g(b)\cdot f(a)$, for all $b\in H$ and $a\in G$, where $f$ and $g$ are morphisms of c-groups. c-crossed modules and morphisms of c-crossed modules form a category.
	
Let $H$ be a normal c-subgroup of a c-group $G$. One can easily see that, in general, we do not have a usual action by conjugation of $G$ on $H$. However, we have a similar situation as given below.
	
\begin{lem}
If $H$ is a perfect normal c-subgroup of a c-group $G$, then we have an action of $G$ on $H$ in the category $\cGr$ and the inclusion morphism defines a c-crossed module.
\end{lem}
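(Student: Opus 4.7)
The plan is to define the action $G\times H\to H$ by conjugation, $g\cdot h := g+h+(-g)$, and then verify the action axioms and the crossed module conditions. The pivotal observation is that normality of $H$ gives $g+h-g \inc H$, and perfection upgrades this to $g+h-g\in H$ on the nose; so the formula really defines a \emph{function} $G\times H\to H$, not merely a relation valued in equivalence classes of $H$. This is exactly the joint role played by the two hypotheses: without perfection, one would only obtain an action up to a choice of representative in $H$, and iterated actions would compound such choices.

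I would next check the four action axioms. Axiom (iv) --- preservation of $\sim$ --- follows because $+$ and taking inverses are operations in $\wtilde{\Sets}$: $g\sim g'$ and $h\sim h'$ imply $g+h-g\sim g'+h'-g'$. Axiom (iii) reads $0+h+(-0)\sim h$ and is obtained from $-0\sim 0$ together with the unit congruence of $G$. For axiom (i), I would expand
\[
(g\cdot h)+(g\cdot h_1) = (g+h+(-g))+(g+h_1+(-g))
\]
and collapse the central $(-g)+g\sim 0$ via associativity and the inverse and unit congruences of Definition~\ref{defcgr}, arriving at $g+(h+h_1)+(-g)=g\cdot(h+h_1)$. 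Axiom (ii) is analogous: using the special congruence $-(g+g_1)\sim -g_1+(-g)$ --- which follows from the uniqueness up to congruence of inverses in a c-group --- one rewrites $(g+g_1)\cdot h$ as $g+(g_1+h+(-g_1))+(-g)=g\cdot(g_1\cdot h)$ after reassociating.

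The crossed module conditions for the inclusion $\partial\colon H\hookrightarrow G$ are then essentially free. Since $\partial$ is the inclusion, both sides of condition (i) are literally $g+h-g$, and both sides of condition (ii) are literally $h+h_1-h$, so each holds as equality rather than merely up to congruence. The genuine obstacle in this argument is not any deep identity but the careful bookkeeping of which special congruences are being invoked at each step (associator, unit, inverse, and the derived $-(a+b)\sim -b-a$); the substantive content is concentrated in the initial well-definedness argument, where perfection combined with normality is what prevents the conjugation operation from living only in the quotient.
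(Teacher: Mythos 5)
Your argument is correct and is the expected one: the lemma is recalled here from \cite{Dat2020} without proof, and the conjugation action is indeed the intended construction, with perfection upgrading normality's $g+h-g\inc H$ to genuine membership $g+h-g\in H$ so that conjugation is a bona fide function $G\times H\to H$. The only point worth pinning down is the bracketing: since $+$ is associative only up to congruence, you should define $g\cdot h$ as $g+(h+(-g))$ so that crossed module condition (i), which demands strict equality $\partial(g\cdot h)=g+(\partial(h)-g)$ for the inclusion $\partial$, holds on the nose rather than merely up to a special congruence.
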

	
\begin{defn}\label{connected cr}
A c-crossed module $(G,H,\partial)$ is called \textit{connected} if $G$ is a connected c-group.
\end{defn}

\begin{prop}
For a categorical group $\mathsf{C}=(C_0, C_1, d_0, d_1, i, m)$, $(\cKer d_0,C_0,d)$ is a connected c-crossed module where $d={d_1}|_{\cKer d_0}$.
\end{prop}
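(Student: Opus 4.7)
The plan is to verify three things: (a) $d = d_1|_{\cKer d_0} \colon \cKer d_0 \to C_0$ is a c-group morphism; (b) the action of $C_0$ on $\cKer d_0$ from the previous proposition satisfies the two c-crossed module axioms; and (c) $\cKer d_0$ is connected. Claim (a) is routine: bifunctoriality of $+$ gives $d_1(f+g) = d_1(f) + d_1(g)$ strictly, $d_1(-f) = -d_1(f)$ as recalled in the preliminaries, and $d_1$ preserves congruence because any isomorphism of arrows in $C_1$ has a codomain-component that is an isomorphism in $\mathsf{C}$. For axiom (i), namely $d(b\cdot a) = b + (d(a) - b)$, I would use $d_1 i = 1_{C_0}$ (since $i(x) = 1_x$) to compute directly
\[
d(b\cdot a) = d_1\bigl(i(b) + (j(a) - i(b))\bigr) = b + (d(a) - b).
\]

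Axiom (ii), $d(a) \cdot a_1 \sim a + (a_1 - a)$, is the main obstacle and would rely on Lemma~\ref{comm}. I would first observe that $-i(d(a)) + a \in \cKer d_1$, since $d_1(-i(d(a)) + a) = -d(a) + d(a) \approx 0$, and then apply Lemma~\ref{comm} with $f = -i(d(a)) + a$ and $g = a_1$ to obtain the weak special isomorphism
\[
(-i(d(a)) + a) + a_1 \approx a_1 + (-i(d(a)) + a).
\]
Starting from $a + (a_1 - a)$, the plan is to insert $0 \sim i(d(a)) - i(d(a))$ in front, reassociate so that the sub-expression $-i(d(a)) + a + a_1$ appears, substitute using the displayed isomorphism above, and then collapse the trailing $a + (-a) \sim 0$ together with the right unit. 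This yields $i(d(a)) + (a_1 - i(d(a))) = d(a)\cdot a_1$. Every rearrangement is legal up to $\sim$ because the congruence on $C_1$ is by definition generated by isomorphisms, and weak special isomorphisms are in particular isomorphisms.

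For claim (c), given any $f, g \in \cKer d_0$, I would pick an isomorphism $\theta_0 \colon d_0(f) \to d_0(g)$ (possible since both $d_0(f) \approx 0 \approx d_0(g)$) and set $\theta_1 = g \circ \theta_0 \circ f^{-1} \colon d_1(f) \to d_1(g)$, which is an isomorphism because $f$ is invertible in the groupoid $\mathsf{C}$. Then $\theta_1 \circ f = g \circ \theta_0$ by construction, so $f \approx g$ as arrows, hence $f \sim g$ in $\cKer d_0$. The only genuinely delicate point is the coherence bookkeeping in axiom (ii); if desired, the cancellation/reassociation chain can be realised as a single composite of special isomorphisms via Mac~Lane coherence before invoking Lemma~\ref{comm}, avoiding an ad hoc step-by-step calculation.
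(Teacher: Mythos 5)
Your proof is correct and follows the route the paper's framework clearly intends: the proposition is recalled here from \cite{Dat2020} without proof, but Lemma~\ref{comm} (the weak special isomorphism $f+g\approx g+f$ for $f\in \Ker d_1$, $g\in \Ker d_0$) is recorded in the preliminaries precisely to deliver the Peiffer-type axiom (ii) via the insertion of $i(d(a))-i(d(a))$ and the swap of $-i(d(a))+a$ past $a_1$, exactly as you do. Your strict verification of axiom (i) from $d_1i=1_{C_0}$ and the connectedness argument via $\theta_1=g\,\theta_0\,f^{-1}$ are likewise the standard steps, so nothing is missing.
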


In \cite{Dat2020} we introduced another c-group, $\textbf{Star}_{\mathsf{C}} 0$,  from a categorical group $\mathsf{C}=(C_0, C_1, d_0, d_1, i, m)$ apart from $\cKer d_0$. As a set $\textbf{Star}_{\mathsf{C}} 0=\{f\in C_1\mid  d_0(f)=0\}$ and the addition operation on $\textbf{Star}_{\mathsf{C}} 0$ is given by $f+f'=(f+f')\gamma$, where $f+f':0+0\rightarrow d_1(f)+d_1(f')$ is a sum in $C_1$ and $\gamma : 0 \rightarrow 0+0$ is the unique special isomorphism in $C_1$. The congruence relation on $\textbf{Star}_{\mathsf{C}} 0$ is induced by the relation on $C_1$, which is the relation of being isomorphic in $C_1$. $C_0$ is also a c-group where the congruence relation is given by isomorphisms between the objects. Moreover, there is an action of $C_0$ on $\textbf{Star}_{\mathsf{C}} 0$ given by $r\cdot c=(i(r)+(c-i(r)))\gamma$, for any $r\in C_0, c\in \textbf{Star}_{\mathsf{C}} 0$, where $\gamma\colon 0\approx r+(0-r)$ is a special isomorphism.

\begin{defn}\label{strict cr}
A c-crossed module $(G,H,\partial)$ is called \textit{strict} if,
\begin{enumerate}[label=(\roman{*}), leftmargin=1cm]
	\item $\partial(b\cdot a)= b+(\partial(a)-b)$,
	\item $\partial(a)\cdot a_1= a+(a_1-a)$,
\end{enumerate}
for $a,a_1\in G$ and $b\in H$.
\end{defn}
	
\begin{defn}\label{weak special} 
In a c-crossed module $(G,H,\partial)$ a congruence $g\sim g'$ in $G$ is called a \textit{weak special congruence} if $\partial (g)\sim\partial(g')$ is a special congruence in $H$.
\end{defn}

For a c-crossed module $(G,H,\partial)$ every special congruence in $G$ is a weak special congruence since the morphism $\partial$ carries any special congruence to a special congruence between pairs of elements
	
\begin{defn}\label{special}
A c-crossed module $(G,H,\partial)$ is called \emph{special} if for any congruence $\gamma :\partial(c)\sim r$,  there exists $c'\sim c$, such that $\partial(c') =r$, where $c, c'\in G$ and $r\in H$. If $\gamma $ is a special congruence, then  $c'$ is the unique element in $G$ which is special weakly congruent to $c$.
\end{defn}
	
A c-crossed module is called as a \textit{cssc-crossed module} if it is connected, strict, and special c-crossed module. This kind of crossed module is exactly that we were looking for.
	
\begin{thm}
For a categorical group $\mathsf{C}=(C_0, C_1, d_0, d_1, i, m)$ the triple $(\textbf{\em Star}_{\mathsf{C}} 0, C_0, d)$ is a cssc-crossed module, where $d={d_1}|_{\textbf{\em Star}_{\mathsf{C}}}$.
\end{thm}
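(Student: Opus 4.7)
The plan is to verify, in the following order, that $d=d_1|_{\textbf{Star}_{\mathsf{C}} 0}$ is a c-group morphism, that the two strict crossed-module axioms hold, that $\textbf{Star}_{\mathsf{C}} 0$ is a connected c-group, and that the special-lifting property holds. That $d$ is a c-group morphism I would get directly from the functoriality of $d_1$: the addition on $\textbf{Star}_{\mathsf{C}} 0$ is the bifunctorial sum in $C_1$ precomposed with the special isomorphism $\gamma\colon 0\to 0+0$, whose source and target are identified by $d_1$.

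For connectedness of $\textbf{Star}_{\mathsf{C}} 0$ I would use that $\mathsf{C}$ is a groupoid: for any $f\colon 0\to x$ and $f'\colon 0\to x'$ in $\textbf{Star}_{\mathsf{C}} 0$, setting $\theta_0=1_0$ and $\theta_1=f'\circ f^{-1}\colon x\to x'$ gives $\theta_1 f=f'\theta_0$, hence $f\sim f'$ in the sense recalled in the preliminaries. For the speciality property, given $c\colon 0\to d_1(c)$ in $\textbf{Star}_{\mathsf{C}} 0$ and a congruence $\gamma\colon d(c)\sim r$ in $C_0$, i.e.\ an isomorphism $\gamma\colon d_1(c)\to r$ in $\mathsf{C}$, I would take $c'=\gamma\circ c\colon 0\to r$; then $d(c')=r$ and the pair $(1_0,\gamma)$ exhibits $c'\sim c$. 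When $\gamma$ is a special isomorphism, uniqueness of $c'$ up to weak special isomorphism with fixed endpoints follows from the coherence property of $\mathsf{C}$.

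The main obstacle is verifying the two strict axioms. Axiom (i), $d(b\cdot a)=b+(d(a)-b)$, I expect to be straightforward: expanding $b\cdot a=(i(b)+(a-i(b)))\gamma'$ and using $d_1(i(b))=b$ together with the functoriality of $+$ in $C_1$ yields codomain exactly $b+(d_1(a)-b)$, since the $\gamma'$ correction does not affect the codomain. Axiom (ii), $d(a)\cdot a_1=a+(a_1-a)$, is where the real work lies. The left side expands as $(i(d_1(a))+(a_1-i(d_1(a))))\gamma''$ with $i(d_1(a))=1_{d_1(a)}$ and $-1_{d_1(a)}=1_{-d_1(a)}$, and the right side expands via the definitions of addition and inverse on $\textbf{Star}_{\mathsf{C}} 0$. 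I expect both sides to reduce to the same arrow $0\to d_1(a)+(d_1(a_1)-d_1(a))$ in $C_1$, but the key difficulty is that the two expansions a priori differ by several special-isomorphism corrections; one must check that these corrections either coincide by construction or cancel by coherence, so that the identification is an equality rather than only a weak special isomorphism. This is precisely the reason for building the $\gamma$-corrections into the definitions of the addition and the action on $\textbf{Star}_{\mathsf{C}} 0$ rather than working with the bifunctor $+$ of $\mathsf{C}$ directly.
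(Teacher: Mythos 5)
First, a point of reference: the present paper only \emph{recalls} this theorem from the earlier work \cite{Dat2020} and gives no proof of it here, so there is no in-text argument to measure yours against; I can only assess the proposal on its merits. Your decomposition is the right one, and three of its four parts are essentially complete: $d$ is a c-group morphism because the $\gamma$-correction in the sum on $\textbf{Star}_{\mathsf{C}}0$ only alters domains, which $d_1$ ignores; connectedness via $\theta_0=1_0$, $\theta_1=f'\circ f^{-1}$ is exactly what the groupoid structure gives; and your lift $c'=\gamma\circ c$ with the pair $(1_0,\gamma)$, plus coherence for the uniqueness clause, establishes speciality. Axiom (i) also goes through as you say. (You silently assume that $\textbf{Star}_{\mathsf{C}}0$ is a c-group and that the displayed formula is an action in $\cGr$; these are part of the package being asserted, but they are routine compared with what follows.)

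The genuine gap is in axiom (ii), and your own framing of it --- ``the two expansions differ by several special-isomorphism corrections'' which should ``cancel by coherence'' --- misidentifies the difficulty. Expanding $a+(a_1-a)$ by the interchange law gives
\[
a+(a_1-a)\;=\;\bigl(1_{d_1(a)}+(a_1+1_{-d_1(a)})\bigr)\circ\bigl(a+(1_0+(-a))\bigr)\circ\Sigma,
\]
where $\Sigma$ collects the built-in $\gamma$-corrections and \emph{is} special, but the middle factor $a+(1_0+(-a))$ involves the arrows $a$ and $-a$ themselves, which are not special isomorphisms. Coherence only identifies parallel special isomorphisms, so at this stage you can conclude nothing: equal domain and codomain never force equal morphisms. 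The missing idea is that $a+(-a)$ is nevertheless special: naturality of $\delta$ applied to $a\colon 0\to d_1(a)$ gives $\delta_{d_1(a)}\circ(a+(-a))=\delta_0$, i.e. $a+(-a)=\delta_{d_1(a)}^{-1}\circ\delta_0$, and naturality of $\lambda$ exhibits $a+(1_0+(-a))$ as a conjugate of $a+(-a)$ by special isomorphisms. Only after this reduction do both sides of axiom (ii) become $1_{d_1(a)}+(a_1+1_{-d_1(a)})$ precomposed with a special isomorphism $0\to d_1(a)+(0-d_1(a))$, at which point coherence (uniqueness of special isomorphisms between fixed objects) yields the strict equality. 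Without the naturality-of-$\delta$ step your argument cannot close, and since strictness of (ii) is precisely what upgrades a c-crossed module to a cssc-crossed module, this is the heart of the theorem rather than a detail.
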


\section{Objects and arrows in $\mathbb{G}$}
	
In this section we start with a cssc-crossed module and construct the objects and arrows of a category associated with this cssc-crossed module.
	
Let $\partial: M \rightarrow N$ be a cssc-crossed module. Define a category $\mathbb{G}$ with the set of objects $\mathbb{G}_0=N$, which has a c-group structure according to the definition of c-crossed module. Consider the product $N \times M$. Elements of $N \times M$ are the pairs $(r,c)$, where $r \in N$ and $c \in M$. As we know from the definition of a c-group there exist special congruences between certain elements in $N$. We will denote these congruences by $\alpha, \beta,...$. If $\alpha:r \sim  r' $ is a special congruence in $N$, then we write an arrow $\alpha: r \rightarrow  r' $ with $\mathsf{dom}(\alpha)=r$, $\mathsf{codom}(\alpha)=r'$. Since for any special congruence $\alpha: r \sim r' $, there exists the special congruence ${{\alpha}^{-1}}: r' \sim  r $ and the corresponding arrows in $N \times M$ are called by special isomorphisms.
	
We now enlarge the set $N\times M$ as follows: Consider the set $N \bar{\times} M$ with elements of the type $\alpha, \beta,...,\beta (r,c) \alpha $, where $\alpha, \beta,...$ are all special isomorphisms in  $N \times M$, $(r,c) \in N \times M$, and in $\beta (r,c) \alpha$ we have $\mathsf{codom}(\alpha)=r$, $\mathsf{dom}(\beta)=\partial(c)+r$. From this definition it follows that any element $(r,c) \in N \times M$ is an element in $N \bar{\times} M$, where $\alpha=1_r: r \xrightarrow{=} r$ and $\beta=1_{\partial(c)+r}: \partial(c)+r \xrightarrow{=} \partial(c)+r$ are special identity arrows. We define $\mathsf{dom}(\beta (r,c) \alpha)=\mathsf{dom}(\alpha)$ and $\mathsf{codom}(\beta (r,c) \alpha)=\mathsf{codom}(\beta)$. Hence $\beta (r,c) \alpha$ actually consists of three compossible arrows 
\begin{equation*}
r' \xrightarrow{~~\alpha~~} r \xrightarrow{(r,c)} \partial(c)+r\xrightarrow{~~\beta~~} r''.
\end{equation*}

\begin{lem} \label{lem1}
Let $\partial: M \rightarrow N$ be a c-crossed module. Then we have a special congruence $\partial(0)\sim 0$ in $N$.
\end{lem}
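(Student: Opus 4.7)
The plan is to imitate the classical group-theoretic proof that a homomorphism sends zero to zero, checking at each step that only special congruences are used. The starting point is the c-group axiom (ii) applied in $M$, which gives the special congruence $0 \sim 0+0$ in $M$. Applying the c-group morphism $\partial$, which by definition preserves the operation strictly and, as recorded in Section~2, carries special congruences to special congruences, yields a special congruence $\partial(0) \sim \partial(0)+\partial(0)$ in $N$.

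From here I would add $-\partial(0)$ on the right to both sides. Because the addition map of the c-group $N$ is a morphism in $\wtilde{\Sets}$ and, as part of the standing conventions, sends special congruences to special congruences, this produces the special congruence
\[
\partial(0) + (-\partial(0)) \;\sim\; (\partial(0)+\partial(0)) + (-\partial(0)).
\]
The left side is specially congruent to $0$ by axiom (iii). The right side can be reassociated (a special congruence) to $\partial(0) + (\partial(0)+(-\partial(0)))$, then reduced to $\partial(0)+0$ using axiom (iii), and finally to $\partial(0)$ using axiom (ii). Composing all of these special congruences in $N$ yields the desired $\partial(0) \sim 0$.

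The step that deserves the most care is not any single computation but the bookkeeping that every congruence produced along the way is indeed special, rather than merely a congruence in the ambient equivalence relation. This is precisely the content of the lemma: the preliminaries already note that $f(0)\sim 0$ for any c-group morphism $f$, but the novelty here is to pin down that this congruence is \emph{special}, which is what the subsequent construction of arrows in $\mathbb{G}$ — where special isomorphisms play a distinguished role — will require.
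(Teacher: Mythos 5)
Your proof is correct and follows essentially the same route as the paper's: apply $\partial$ to the special congruence $0+0\sim 0$ in $M$, use that $\partial$ strictly preserves $+$ and carries special congruences to special congruences to obtain $\partial(0)+\partial(0)\sim\partial(0)$, and then cancel. The only difference is that the paper leaves the final cancellation implicit ("which gives the desired special congruence") while you spell it out by adding $-\partial(0)$ and reassociating, which is a harmless and indeed welcome elaboration.
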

	
	\begin{proof}
		We have the special congruence $0+0 \sim 0$ in $M$, from which  the special congruence $\partial(0+0) \sim \partial(0)$ follows, since $\partial$ carries special congruences in $M$ to special congruences  in $N$. From this we have the special congruences $\partial(0)+\partial(0) \sim \partial(0+0) \sim \partial(0)$, which gives the desired special congruence. 
	\end{proof}
	\begin{lem}\label{lem2}
		Let $\partial: M \rightarrow N$ be a cssc-crossed module and $\alpha :r \sim  r' $ a special congruence  in $N$. Then there exists a unique element $c \in M$ with the properties that $c$ is weakly special congruent to $0$ and   $\partial (c)=r'-r$.
	\end{lem}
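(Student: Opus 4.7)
The plan is to obtain $c$ by invoking the specialness of the crossed module (Definition 2.13) applied to the element $0 \in M$, once we verify that $\partial(0) \sim r'-r$ is a special congruence in $N$.

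First I would promote the hypothesis $\alpha : r \sim r'$ to a special congruence $r'-r \sim 0$ in $N$. Since special congruences are by definition closed under sums and compositions, I can add the identity (trivial) congruence $1_{-r} : -r \sim -r$ to $\alpha$ to get the special congruence $r' + (-r) \sim r + (-r)$, and then compose with the axiom-(iii) congruence $r + (-r) \sim 0$. This yields a special congruence $\beta : r'-r \sim 0$ in $N$.

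Next I would combine this with Lemma~2.1: there is a special congruence $\partial(0) \sim 0$ in $N$. Composing $\partial(0) \sim 0$ with $\beta^{-1}: 0 \sim r'-r$ produces a special congruence
\[
\gamma : \partial(0) \sim r'-r
\]
in $N$. Now the specialness of the c-crossed module $\partial : M \rightarrow N$ (Definition~2.13) applies to the pair $(0, r'-r)$: there exists $c \in M$ with $c \sim 0$ and $\partial(c) = r'-r$, and because $\gamma$ is a special congruence, this $c$ is the unique element of $M$ that is weakly special congruent to $0$ with $\partial$-value equal to $r'-r$. It remains to note that the congruence $c \sim 0$ is indeed a weak special congruence in the sense of Definition~2.12, which is immediate: $\partial(c) = r'-r \sim 0 \sim \partial(0)$ through the special congruence $\gamma$, so the condition on $\partial$-images is met.

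The only genuinely delicate point is bookkeeping with what counts as a \emph{special} congruence: one must check that each step above stays inside the class generated (under sums and compositions) by the congruences from axioms (i)--(iii) of Definition~2.3, so that when we finally invoke the uniqueness clause of Definition~2.13, its hypothesis ``$\gamma$ is a special congruence'' is actually satisfied. Once this is in place, both existence and uniqueness of $c$ are immediate consequences of the special property of the cssc-crossed module, with no further calculation needed.
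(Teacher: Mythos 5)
Your proposal is correct and follows essentially the same route as the paper: derive a special congruence $r'-r\sim 0$ from $\alpha$, combine it with Lemma \ref{lem1} to get a special congruence $\partial(0)\sim r'-r$, and then invoke the special property of the cssc-crossed module for both existence and uniqueness of $c$. The only difference is that you spell out the bookkeeping (adding $1_{-r}$ and composing with the inverse axiom) that the paper leaves implicit in the phrase ``by the condition we assume.''
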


	\begin{proof}
		By the condition we assume, it follows that there is a special congruence $0\sim r'-r$. From Lemma \ref{lem1} $\partial(0)\sim 0$ is a special congruence, from which we conclude the existence of a special congruence $\partial (0)\sim r'-r$. Since $\partial: M \rightarrow N$  is a special c-crossed module, it follows that there exists a unique element  $c \in M$ with  weakly special congruence $c\sim 0$ and $\partial(c)=r'-r$, which completes the proof.
	\end{proof}
	\begin{cor}\label{cor1}
		If there is a special congruence $\alpha :r \sim  r' $ for $r,r' \in N$, then there exists a unique element $c \in M$ with $\partial(c)=r'-r$, a weak congruence $c\sim  0 $ and an arrow $\varepsilon(r,c): r \rightarrow r'$ in $N \bar{\times} M$, where $\varepsilon: (r'-r)+r \rightarrow r'$ is a special isomorphism corresponding to the special congruence $(r'-r)+r\sim r'$ in $N$ and $ (r,c): r \rightarrow (r'-r)+r$ is an arrow in $N \bar{\times} M$.
	\end{cor}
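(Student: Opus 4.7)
The statement is essentially a packaging corollary: it takes the unique element produced by Lemma \ref{lem2} and assembles it with the definitions of \S 3 into a concrete arrow in $N\bar{\times} M$. So my plan is to unwind the definitions and verify that the domains and codomains match.

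First, I would apply Lemma \ref{lem2} directly to the given special congruence $\alpha\colon r\sim r'$. This immediately yields a unique $c\in M$ with $c\sim 0$ a weak special congruence and $\partial(c)=r'-r$. This disposes of the existence and uniqueness of $c$, and of the two properties $\partial(c)=r'-r$ and $c\sim 0$.

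Next I would check that $(r,c)$ really is an arrow of $N\bar{\times} M$ of the shape needed. By construction of $N\bar{\times} M$, any pair $(r,c)\in N\times M$ is regarded as an arrow $r\to \partial(c)+r$. Substituting $\partial(c)=r'-r$, this is $(r,c)\colon r\to (r'-r)+r$, exactly as required.

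Then I would exhibit the special congruence $(r'-r)+r\sim r'$ in $N$. Using the c-group axioms (Definition \ref{defcgr}), we have a special congruence $-r+r\sim 0$, followed by $r'+(-r+r)\sim r'+0\sim r'$, composed with the associativity congruence $(r'-r)+r\sim r'+(-r+r)$. The composition of special congruences is again special, so $(r'-r)+r\sim r'$ is special, and the associated isomorphism $\varepsilon\colon (r'-r)+r\to r'$ is a special isomorphism in the sense of \S 3.

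Finally, since $\mathsf{codom}\bigl((r,c)\bigr)=(r'-r)+r=\mathsf{dom}(\varepsilon)$, the formal composite $\varepsilon(r,c)$ fits the template $\beta(r,c)\alpha$ of an element of $N\bar{\times} M$ (with $\alpha=1_r$ and $\beta=\varepsilon$), giving an arrow $\varepsilon(r,c)\colon r\to r'$. The only step that needs any care at all is recognising that the congruence $(r'-r)+r\sim r'$ is special, which follows purely from the c-group axioms, so I do not expect any genuine obstacle here — the corollary is essentially a translation of Lemma \ref{lem2} into the arrow notation introduced just before the statement.
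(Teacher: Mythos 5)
Your proposal is correct and takes essentially the same route as the paper, whose entire proof is the single line that the corollary ``obviously follows from Lemma \ref{lem2}''; you have simply spelled out the routine verifications (that $(r,c)$ has codomain $(r'-r)+r$, and that $(r'-r)+r\sim r'$ is a special congruence assembled from the c-group axioms) that the authors leave implicit.
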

	\begin{proof}
		Obviously follows from Lemma \ref{lem2}.
	\end{proof}
	
%
%
%

We now define two identifications in $N \bar{\times} M$ as follows:

\begin{description}
	\item[Identification I in $N \bar{\times} M$:] In what follows we will identify  any special isomorphism $\alpha :r \sim  r' $ in $N \bar{\times} M$ with the arrow $\varepsilon(r,c)$  defined uniquely; i.e. $c$ is the element defined uniquely and weakly special congruent to $0$ in $M$ with $\partial (c)=r'-r$ and $\varepsilon$ is a  unique special isomorphism defined in Corollary \ref{cor1}. We will use the notation $\alpha \equiv \varepsilon(r,c)$.
	\item[Identification II in $N \bar{\times} M$:] If $\varphi:c\sim c'$ is a weak special congruence in $M$, then $\beta (r,c)\alpha\approx\beta' (r,c')\alpha$, for any $r\in N$, where $\beta=\beta'(\partial(\varphi)+1_r)$, for any special congruence $\beta'$ with $\mathsf{dom}(\beta')=\partial(c')+r$, and any $\alpha$ with $\mathsf{codom}(\alpha)=r$
	\begin{equation}\label{diag:31}
		\begin{gathered}
			\xymatrix@R=15mm@C=15mm{
				&  & \partial(c)+r \ar[dr]^-{\beta} \ar[dd]^-{\approx}_-{\partial(\varphi)+1_r} &  \\ 
				r' \ar[r]^-{\alpha} & r \ar[ur]^-{(r,c)} \ar[dr]_-{(r,c')} &  & r'' \\
				&  & \partial(c')+r \ar[ur]_-{\beta'} &  
			}
		\end{gathered}
	\end{equation}
\end{description}

%
	In the diagram, in particular,  if   $\beta'=1_{\partial(c')+r}$ then $\beta=\partial(\varphi)+1_r$ and we obtain $(r,c')\alpha=(\partial(\varphi)+1_r)(r,c)\alpha$. If in addition $\alpha=1_r$, then we have $(r,c')=(\partial(\varphi)+1_r)(r,c)$.
	
	It is obvious, but worth to note, that on the base of described identifications we have the following. If $\alpha\equiv\varepsilon(r,c)$ by \textbf{Identification I} and $\varepsilon(r,c)\equiv\varepsilon'(r,c')$ by \textbf{Identification II}, then $\alpha\equiv\varepsilon'(r,c')$; also if $\beta(r,c)\alpha\equiv\beta'(r,c')\alpha\equiv\beta''(r,c'')\alpha$ by \textbf{Identification II}, then $\beta(r,c)\equiv\beta''(r,c'')\alpha$.

	
	We denote the resulting set obtained by \textbf{Identifications I} and \textbf{II} by $\mathbb{G}_{1}$; the elements of $\mathbb{G}_{1}$ will be called arrows of the category $\mathbb{G}$.
	
	
	\section{Composition of arrows in $\mathbb{G}$}
	
	In this section we define compositions of arrows in $\mathbb{G}$.

\begin{defn} \label{def1} \begin{enumerate}[label=(\textit{\alph{*}}), leftmargin=1cm]
\item\label{def1:a} Let $\alpha$ and $ \beta$  be two composable special isomorphisms in $\mathbb{G}_{1}$. The composition  $ \beta \circ \alpha$ is defined to be $ \beta\alpha$ which is the special isomorphism corresponding to the composition of special congruences  $\alpha$ and $ \beta$ in $N$ which is a special congruence as well.
\item\label{def1:b} If $\beta$ and $ \gamma$  are composable special isomorphisms in $\mathbb{G}_{1}$, the composition $\gamma \circ(\beta (r,c)\alpha)$ is defined as $ (\gamma\beta)(r,c)\alpha$. In analogous way  $(\beta (r,c)\alpha) \circ \delta=\beta (r,c)(\alpha\delta)$, where $\delta$ and $\alpha$ are composable.
From this definition it follows that 
\begin{equation}\label{eq:1} 
	\gamma \circ ((r,c)\alpha)=\gamma (r,c)\alpha
\end{equation} 
\begin{equation}\label{eq:2}
	(\beta (r,c)) \circ \delta=\beta (r,c) \delta
\end{equation}
\item\label{def1:c} Let $\beta (r,c)\alpha$ and $\beta' (r',c')\alpha'\in \mathbb{G}_{1}$. If $\beta$ and $\alpha'$ are composable, we define 
\begin{equation*}
	(\beta' (r',c')\alpha') \circ (\beta (r,c)\alpha)=\theta (r,c'+c)\alpha,
\end{equation*}
where $\theta$ is the composition of the following special isomorphisms
\begin{equation*}
	(\partial(c')+\partial(c))+r\approx \partial(c')+(\partial(c)+r)\overset{{{1}_{\partial(c')}}+\beta }{\mathop{\approx }}\,\partial(c')+{{r}_{2}}\overset{{{1}_{\partial(c')}}+\alpha }{\mathop{\approx }}\,\partial(c')+r'\overset{\beta }{\mathop{\approx }}\,r''
\end{equation*}
the picture is 
\begin{equation*}
	{{r}_{1}}\xrightarrow{\alpha }r\xrightarrow{(r,c)}\partial(c)+r\xrightarrow{\beta }{{r}_{2}}\xrightarrow{\alpha '}r'\xrightarrow{(r',c')}\partial(c')+r'\xrightarrow{\beta '}r''.
\end{equation*}
\end{enumerate}
\end{defn}

Now we shall show that the above definitions of compositions do not depend on the choice of representatives. 

\begin{prop}
In definition \ref{def1:a}, the composition of special isomorphisms does not depend on the choice of representatives.
\end{prop}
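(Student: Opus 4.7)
The plan is to reduce both possible composites to a common canonical arrow in $\mathbb{G}_1$ by exploiting the uniqueness provided by the cssc-crossed module structure. First I observe that the composite $\beta\alpha$ of two composable special congruences in $N$ is itself a special congruence between the same endpoints $r$ and $r''$, since by the definition of c-groups special congruences are closed under composition. Thus the right-hand side of Definition \ref{def1:a} lies in the class of special isomorphisms, and the real question is whether different representatives of $\alpha$ and $\beta$ in $\mathbb{G}_1$ produce composites that agree as arrows in $\mathbb{G}_1$.

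Next I would invoke Identification I together with Lemma \ref{lem2}: for any two elements $r, s \in N$ connected by a special congruence, there is a canonical arrow $\varepsilon_{r,s}(r, c_{r,s}) \in \mathbb{G}_1$ with which \emph{every} special congruence $r \sim s$ is identified, where $c_{r,s}$ is the unique element of $M$ weakly specially congruent to $0$ with $\partial(c_{r,s}) = s - r$, and $\varepsilon_{r,s}\colon (s-r)+r \to s$ is the canonical special isomorphism. Crucially, this canonical pair depends only on the endpoints $r$ and $s$, not on the witnessing special congruence. Consequently, if $\alpha,\alpha_1\colon r\to r'$ and $\beta,\beta_1\colon r'\to r''$ are any two choices of representatives, then $\beta\alpha$ and $\beta_1\alpha_1$ are both special congruences $r\sim r''$, so by Identification I both are identified with the same canonical arrow $\varepsilon_{r,r''}(r, c_{r,r''})$. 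This yields $\beta\circ\alpha \equiv \beta_1\circ\alpha_1$, as required.

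The main obstacle is verifying that the canonical element $c_{r,s}$ really is independent of the witnessing special congruence. This is where Lemma \ref{lem1}, Lemma \ref{lem2}, and the \emph{special} property of Definition \ref{special} combine: once $\partial(0)\sim 0$ is known to be a special congruence, any special congruence $r\sim s$ yields a special congruence $\partial(0)\sim s-r$ after translation, and the special property singles out a unique $c_{r,s}\sim 0$ (weakly specially) with $\partial(c_{r,s})=s-r$, determined only by $r$ and $s$. The uniqueness of the special isomorphism $\varepsilon_{r,s}$ between two fixed objects then follows from coherence of special congruences in $N$. Once these two uniqueness statements are in hand, Identification II does not introduce further ambiguity because $c_{r,s}\sim 0$ is already canonical, and the well-definedness of $\beta\circ\alpha$ in $\mathbb{G}_1$ follows immediately.
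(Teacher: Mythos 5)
You have misread which ``representatives'' are at stake, and as a result your argument never touches the actual content of the proposition. An arrow of $\mathbb{G}_1$ that happens to be a special isomorphism $\alpha\colon r\to r'$ has two kinds of representatives: the special congruence $\alpha$ itself, whose composites are computed by rule (a), and its Identification I representative $\varepsilon(r,c)$ with $\partial(c)=r'-r$ and $c$ weakly specially congruent to $0$, whose composites are computed by rule (c). Well-definedness therefore requires checking that these two rules agree: if $\alpha\equiv\varepsilon(r,c)$ and $\beta\equiv\varepsilon_1(r',c_1)$ with $\beta\colon r'\to r_1'$, then the special isomorphism $\beta\alpha$ produced by rule (a), which Identification I turns into $\bar\varepsilon(r,\bar c)$ with $\partial(\bar c)=r_1'-r$, must coincide in $\mathbb{G}_1$ with $\varepsilon_1(r',c_1)\circ\varepsilon(r,c)=\eta(r,c_1+c)$ produced by rule (c). The second component of the latter is $c_1+c$, not the canonical element $\bar c$, so the two arrows are not equal on the nose and some identification must be invoked to relate them.

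Your proof only verifies the (essentially tautological) statement that two special congruences with the same endpoints are sent by Identification I to the same canonical arrow; it never compares rule (a) with rule (c). The missing step is exactly the heart of the paper's argument: since $M$ is connected one has a congruence $c_1+c\sim\bar c$, and since $\partial(c_1+c)=(r_1'-r')+(r'-r)$ is specially congruent to $r_1'-r=\partial(\bar c)$, this congruence is a \emph{weak special} congruence; Identification II, together with rule (b) to absorb the special isomorphisms $\eta$ and $\bar\varepsilon$, then yields $\eta(r,c_1+c)=\bar\varepsilon(r,\bar c)$. Without this comparison your closing claim that ``the well-definedness of $\beta\circ\alpha$ in $\mathbb{G}_1$ follows immediately'' is an assertion, not a proof.
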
 

\begin{proof}
Suppose $\alpha \equiv \varepsilon(r,c)$, $\beta \equiv \varepsilon_1 (r',c_1)$, where $\alpha:r \rightarrow r'$, $\beta:r' \rightarrow r_1'$, $\varepsilon: (r'-r)+r \rightarrow r'$ and $\varepsilon_1: (r_1'-r')+r' \rightarrow r_1'$ are special isomorphisms with $\partial(c)= r'-r $, $\partial(c_1)= r_1'-r' $. Let $\beta \alpha=\Bar{\varepsilon} (r,\Bar{c})$ by \textbf{Identification I}, where $\Bar{\varepsilon}: (r_1'-r')+r' \rightarrow r_1'$ is a special isomorphism and $\partial(\Bar{c})=r_1'-r$. We have $\varepsilon_1 (r',c_1)\circ \varepsilon(r,c) =\eta(r,c_1+c)$, where 
\begin{equation*}
\eta: (\partial(c_1)+\partial(c))+r \mathop{\approx }(r_1'-r')+(\partial(c)+r) \mathop{\approx }(r_1'-r')+r'\mathop{\approx }r_1'
\end{equation*}
is the  composition of the special isomorphisms. We have to show the equality $\eta(r,c_1+c)=\Bar{\varepsilon} (r,\Bar{c})$. Since $M$ is connected, we have the congruence $c_1+c\sim \Bar{c} $ in $M$. Moreover $\partial(c_1)+\partial(c)=(r_1'-r')+(r'-r)\mathop{\approx } r_1'-r \mathop{\approx }\partial(c_1)$. By \textbf{Identification II}  and Definition \ref{def1} \ref{def1:b} we obtain the desired equality.
\end{proof}
	\begin{prop}
		Let $\beta (r,c)\alpha \in \mathbb{G}_{1}$,  $\gamma$ and $ \beta$  be composable and  $\gamma \equiv \varepsilon (r',c')$ by \textbf{Identification I}. Then 
		\begin{equation*}
		(\varepsilon (r',c'))\circ (\beta (r,c)\alpha)=(\varepsilon (r',c')\circ \beta)\circ ((r,c)\alpha).
		\end{equation*}
	\end{prop}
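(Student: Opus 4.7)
The plan is to unfold both composites directly via Definition~\ref{def1} and observe that they are built from the same underlying chain of special isomorphisms. Set up notation first: if $\beta\colon\partial(c)+r\to r'$, then composability of $\gamma$ with $\beta$ forces $\gamma\colon r'\to r''$ for some $r''\in N$, and Identification~I gives $\partial(c')=r''-r'$ with $c'$ weakly special congruent to $0$ in $M$, while $\varepsilon\colon(r''-r')+r'\to r''$ is the associated special isomorphism.

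For the left-hand side, I regard $\varepsilon(r',c')$ as the canonical decomposition $\varepsilon\cdot(r',c')\cdot 1_{r'}$ and apply Definition~\ref{def1}(c) directly to its composition with $\beta(r,c)\alpha$. This produces $\theta(r,c'+c)\alpha$, where $\theta$ is the composition of the associativity isomorphism, $1_{\partial(c')}+\beta$, $1_{\partial(c')}+1_{r'}$, and $\varepsilon$. For the right-hand side, I first use Definition~\ref{def1}(b), specifically equation~(\ref{eq:2}), to rewrite $\varepsilon(r',c')\circ\beta$ as $\varepsilon(r',c')\beta$; then Definition~\ref{def1}(c), applied to its composition with $(r,c)\alpha = 1_{\partial(c)+r}\cdot(r,c)\cdot\alpha$, yields $\theta'(r,c'+c)\alpha$, where $\theta'$ is the analogous composition with the identity factor now sitting on the inner side.

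The one real step is to compare $\theta$ and $\theta'$. The two prescribed sequences differ only by an inserted identity morphism ($1_{\partial(c')}+1_{r'}$ on the outer side for $\theta$, versus $1_{\partial(c')}+1_{\partial(c)+r}$ on the inner side for $\theta'$), and after cancelling these identities each collapses to $\varepsilon\circ(1_{\partial(c')}+\beta)$ composed with the associativity. Hence $\theta=\theta'$ and the two composites coincide as elements of $\mathbb{G}_1$.

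I do not anticipate a serious obstacle: the statement is essentially a restricted associativity, and the argument is a direct verification from the definitions. The only point requiring a moment of care is matching the composability hypotheses of Definition~\ref{def1}(c) on both sides, which becomes automatic once the decompositions $\varepsilon(r',c')=\varepsilon\cdot(r',c')\cdot 1_{r'}$ and $(r,c)\alpha = 1_{\partial(c)+r}\cdot(r,c)\cdot\alpha$ are made explicit and the codomain of $\beta$ is identified with the $r'$ appearing inside $\varepsilon(r',c')$.
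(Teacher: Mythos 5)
Your proposal is correct and follows essentially the same route as the paper: unfold both sides via Definition~\ref{def1}\ref{def1:c} (using \eqref{eq:2} to absorb $\beta$ on the right-hand side), and observe that the two resulting special isomorphisms $\theta$ and $\theta'$ differ only by inserted identities and hence coincide. Your version merely makes explicit the identity factors that the paper's one-line ``$\theta'=\theta$'' leaves implicit.
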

	\begin{proof}
		We compute both sides according to the definition of composition \ref{def1:c}. We have 
		\begin{equation*}
		(\varepsilon (r',c'))\circ (\beta (r,c)\alpha)=\theta (r,c'+c)\alpha,
		\end{equation*}
		where
		\begin{equation*}
		\theta:(\partial(c')+\partial(c))+r\mathop{\approx}\partial(c')+(\partial(c)+r)\overset{{{1}_{\partial(c')}}+\beta }{\mathop{\approx }}\partial(c')+r'=(r''-r')+r'\overset{\varepsilon }{\mathop{\approx }} r''
		\end{equation*}
		is the composition of the special identities and the equality.
		
		For the right-side we have 
		\begin{equation} \label{eq:3}
		(\varepsilon (r',c')\circ \beta)\circ ((r,c)\alpha)=(\varepsilon (r',c') \beta))\circ ((r,c)\alpha)=\theta'(r,c'+c),
		\end{equation} 
		where $\theta'=\theta$, which proves the equality. In \eqref{eq:3} we  use \eqref{eq:2}.
	\end{proof}
	\begin{prop}
		Let  $\beta(r,c)\alpha \equiv \beta'(r,c')\alpha$ by \textbf{Identification II}. Then for any $\beta_1(r_1,c_1)\alpha_1 $ with $\mathsf{dom}(\alpha_1) = \mathsf{codom}(\beta)$ there is an equality
		\begin{equation*}
		\beta_1(r_1,c_1)\alpha_1\circ\beta(r,c)\alpha=\beta_1(r_1,c_1)\alpha_1\circ \beta'(r,c')\alpha.	
		\end{equation*}
	\end{prop}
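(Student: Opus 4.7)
The plan is to unfold both sides with Definition~\ref{def1}\ref{def1:c} and then re-apply \textbf{Identification II}. By hypothesis, there is a weak special congruence $\varphi\colon c\sim c'$ in $M$ such that $\beta=\beta'(\partial(\varphi)+1_r)$; in particular $\mathsf{codom}(\beta)=\mathsf{codom}(\beta')$, call this common object $r_2=\mathsf{dom}(\alpha_1)$. Applying the composition rule to the left side gives
\[
\beta_1(r_1,c_1)\alpha_1\circ\beta(r,c)\alpha=\theta_L(r,c_1+c)\alpha,
\]
where $\theta_L$ is the composition
\[
(\partial(c_1)+\partial(c))+r\approx \partial(c_1)+(\partial(c)+r)\xrightarrow{1+\beta}\partial(c_1)+r_2\xrightarrow{1+\alpha_1}\partial(c_1)+r_1\xrightarrow{\beta_1}r_1''.
\]
The analogous computation for the right side yields $\theta_R(r,c_1+c')\alpha$, where $\theta_R$ has the same shape with $\beta',c'$ in place of $\beta,c$.

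Next, I would observe that $\psi:=1_{c_1}+\varphi$ is a weak special congruence $c_1+c\sim c_1+c'$ in $M$. Indeed, $\partial$ preserves special congruences, so $\partial(\psi)=1_{\partial(c_1)}+\partial(\varphi)$ is a special congruence in $N$, and therefore $\partial(c_1+c)=\partial(c_1)+\partial(c)\sim\partial(c_1)+\partial(c')=\partial(c_1+c')$ is special, which is exactly the weak-special condition on $\psi$.

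The key calculation is then to verify
\[
\theta_L=\theta_R\circ\bigl((1_{\partial(c_1)}+\partial(\varphi))+1_r\bigr),
\]
so that by \textbf{Identification II} applied to the weak special congruence $\psi$, we conclude $\theta_L(r,c_1+c)\alpha\equiv \theta_R(r,c_1+c')\alpha$, which is the desired equality. To see this, I would substitute $\beta=\beta'(\partial(\varphi)+1_r)$ into $1_{\partial(c_1)}+\beta$, getting
\[
1_{\partial(c_1)}+\beta=(1_{\partial(c_1)}+\beta')\circ\bigl(1_{\partial(c_1)}+(\partial(\varphi)+1_r)\bigr),
\]
and then use naturality of the associator (a special isomorphism in $N$) to move $(1_{\partial(c_1)}+\partial(\varphi))+1_r$ through the associator as $1_{\partial(c_1)}+(\partial(\varphi)+1_r)$. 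The remaining legs $(1+\alpha_1)$ and $\beta_1$ are common to $\theta_L$ and $\theta_R$ and so cancel out.

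The main obstacle, and really the only nontrivial point, is the naturality step: one must be sure that for special congruences in the c-group $N$ the associator commutes with the action of $+$, and that the relation $\beta=\beta'(\partial(\varphi)+1_r)$ may indeed be composed in the way above. Both follow from the structural axioms of c-groups and the fact that composition, sum, and formation of $\partial(-)$ all preserve special congruences; once this is granted the calculation is routine.
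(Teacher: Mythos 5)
Your proposal is correct and follows the same route as the paper's proof: both sides are unfolded via Definition~\ref{def1}\ref{def1:c} into $\theta(r,c_1+c)\alpha$ and $\theta'(r,c_1+c')\alpha$, and the conclusion is drawn from \textbf{Identification II} applied to the weak special congruence $c_1+c\sim c_1+c'$. The only difference is that you explicitly verify the compatibility condition $\theta_L=\theta_R\circ\bigl((1_{\partial(c_1)}+\partial(\varphi))+1_r\bigr)$ required by \textbf{Identification II}, whereas the paper leaves this to the coherence of special congruences; your extra step is sound and if anything makes the argument more complete.
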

	\begin{proof}
		By the definition of composition the left-side of the equality is equal to $\theta(r,c_1+c)\alpha$ and the right-side is equal to $\theta'(r,c_1+c')\alpha$, where $\theta$ and $\theta'$ are the compositions of the following special isomorphisms 
		\begin{equation*}
		\theta:  (\partial(c_1) +\partial(c))+r \approx \partial(c_1) +(\partial(c)+r)\overset{{{1}_{\partial(c_1)}}+\alpha_1\beta }{\mathop{\approx }} \partial(c_1)+r_1 \overset{\beta_1 }{\mathop{\approx }} r_2	
		\end{equation*}
		\begin{equation*}
		\theta':  (\partial(c_1) +\partial(c'))+r \approx \partial(c_1) +(\partial(c')+r)\overset{{{1}_{\partial(c_1)}}+\alpha_1\beta' }{\mathop{\approx }} \partial(c_1)+r_1 \overset{\beta_1 }{\mathop{\approx }} r_2
		\end{equation*} 
		The picture is 
		\begin{equation*}
		\xymatrix@R=15mm@C=15mm{
			&  & \partial(c)+r \ar[dr]^-{\beta} \ar[dd]_-{\partial(\varphi)+1_r} &  \\
			r' \ar[r]^-{\alpha} & r \ar[ur]^-{(r,c)} \ar[dr]_-{(r,c')} &  & r'' \ar[r]^-{\alpha_1} & r_1 \ar[r]^-{(r_1,c_1)} & \partial(c_1)+r \ar[r]^-{\beta_1} & r_2\\
			&  & \partial(c')+r \ar[ur]_-{\beta'} &  
		}
		\end{equation*}
		where $\varphi:c\sim c'$ is a weak special congruence which is meant in \textbf{Identification II}. Since $c_1+c\approx c_1+c'$ is a weak special congruence by the same \textbf{Identification II} we have the equality $\theta(r,c_1+c)=\theta'(r,c_1+c')$.
		
		The equality when compositions are taken from the right-side can be proved similarly.
	\end{proof}
	\begin{prop}\label{prop:44}
		Suppose $\beta'(r',c')\alpha'=\varepsilon(r',c')$ and $\varepsilon(r',c')\equiv \gamma $ is a special isomorphism in the definition of composition (c). Then $(\varepsilon (r',c'))\circ (\beta (r,c)\alpha)=(\gamma \beta) (r,c)\alpha.$
	\end{prop}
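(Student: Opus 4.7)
My plan is to unfold the left-hand side via Definition~\ref{def1}\ref{def1:c} and then use \textbf{Identifications I} and \textbf{II}, together with the uniqueness in Lemma~\ref{lem2}, to match it with the right-hand side.

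First I would apply Definition~\ref{def1}\ref{def1:c}: the hypothesis $\beta'(r',c')\alpha' = \varepsilon(r',c')$ corresponds to $\alpha' = 1_{r'}$, $\beta' = \varepsilon$, and $r_2 = r'$, so
\begin{equation*}
(\varepsilon(r',c')) \circ (\beta(r,c)\alpha) = \theta(r,\, c'+c)\,\alpha,
\end{equation*}
where $\theta \colon (\partial(c')+\partial(c))+r \approx \partial(c')+(\partial(c)+r) \overset{1_{\partial(c')}+\beta}{\approx} \partial(c')+r' \overset{\varepsilon}{\approx} r''$ is a composite of special isomorphisms.

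Next I would use that $\varepsilon(r',c')\equiv\gamma$ by \textbf{Identification I}: Lemma~\ref{lem2} then gives that $c'$ is weakly special congruent to $0$ in $M$ with $\partial(c')=r''-r'$. From this, $c'+c \sim c$ is a weak special congruence in $M$, since $\partial(c'+c)=\partial(c')+\partial(c)\sim 0+\partial(c)\sim \partial(c)$ is a special congruence in $N$ (using Lemma~\ref{lem1}, the specialness of $\partial(c')\sim 0$, and the left unit isomorphism). Applying \textbf{Identification II} with $\varphi\colon c'+c\sim c$ then yields
\begin{equation*}
\theta(r,\, c'+c)\,\alpha \equiv \theta_0(r,\, c)\,\alpha,
\end{equation*}
where $\theta_0=\theta\circ(\partial(\varphi)+1_r)^{-1}\colon \partial(c)+r\to r''$ is itself a composite of special isomorphisms (noting that $\partial(\varphi)+1_r$ has a special-isomorphism inverse built from $\partial(c')\sim 0$ and $\lambda$).

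To conclude, I would apply \textbf{Identification I} to both $\theta_0$ and $\gamma\beta$: each is a special isomorphism $\partial(c)+r\to r''$, so by Lemma~\ref{lem2} each gets identified with the arrow $\varepsilon^*(\partial(c)+r,\, c^*)$ for the same unique $c^*\in M$ weakly special congruent to $0$ with $\partial(c^*)=r''-(\partial(c)+r)$. Hence $\theta_0\equiv\gamma\beta$ in $\mathbb{G}_1$, and therefore $\theta_0(r,c)\alpha \equiv (\gamma\beta)(r,c)\alpha$, completing the proof. The main obstacle I anticipate is this final identification step: one must confirm that \textbf{Identification I} applied to two different-looking composites of special isomorphisms with the same endpoints really collapses them to the same canonical representative, which ultimately rests on the uniqueness clause of Lemma~\ref{lem2}.
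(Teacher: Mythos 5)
Your proposal is correct and follows essentially the same route as the paper: unfold the left-hand side via Definition~\ref{def1}\ref{def1:c}, note that $c'+c\sim c$ is a weak special congruence because $\partial(c')=r''-r'\sim 0$ is special, apply \textbf{Identification II}, and collapse the two resulting special isomorphisms $\partial(c)+r\to r''$ to the same arrow. The only cosmetic difference is that you justify this last collapse via \textbf{Identification I} and the uniqueness clause of Lemma~\ref{lem2}, whereas the paper cites the coherence of special isomorphisms directly; these amount to the same fact.
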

	\begin{proof}
		We have the following notation;
		\begin{equation*}
		\xymatrix@R=15mm@C=15mm{
		r_1 \ar[r]^-{\alpha} & r \ar[r]^-{(r,c)} & \partial(c)+r \ar[r]^-{\beta} & r' \ar[r]^-{(r',c')} \ar@<-2pt> `d[r] `[rr]_{\gamma} [rr] & \partial(c')+r' \ar[r]^-{\varepsilon} & r''
		}
		\end{equation*}
		
		The left-side of the equality is equal to $\theta (r,c'+c)\alpha$ by the Definition \ref{def1} (c), where $\theta:  \partial(c'+c)+r
		\rightarrow r''$ is an obvious special isomorphism. Here we have to note that $\partial(c')=r''-r'$ and  $\varepsilon: (r''-r')+r'\rightarrow r''$ is a special isomorphism. We have to show that $\theta (r,c'+c)\alpha=(\gamma \beta) (r,c)\alpha$. The equality follows from \textbf{Identification II} coherence and \eqref{eq:2}. The diagram is
		\begin{equation*}
			\xymatrix@R=15mm@C=15mm{
				&  & (\partial(c')+\partial(c))+r \ar[dr]^-{\theta} \ar[dd]^-{\approx} &  \\
				r_1 \ar[r]^-{\alpha} & r \ar[ur]^-{(r,c'+c)} \ar[dr]_-{(r,c)} &  & r'' \\
				&  & \partial(c)+r \ar[ur]_-{\gamma\beta} &  
			}
		\end{equation*} 
		Note that the composition 
		\begin{equation*}
		(\partial(c')+\partial(c))+r\approx \partial(c')+(\partial(c)+r)=(r''-r') +\partial(c)+r\approx \partial(c)+r	
		\end{equation*}
		is a special isomorphism since $(r''-r') \approx 0 $ is a special isomorphism.
	\end{proof}

	In what follows we will not use the sign ``$\circ$'' for composition. 
	
	\begin{prop}\label{prop:4.6}
		For any three composible arrows we have an equality
		\begin{equation*}
		(r,c)((r_2,c_2)(r_1,c_1))=((r,c)(r_2,c_2))(r_1,c_1).	
		\end{equation*}
	\end{prop}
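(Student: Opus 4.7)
My strategy is to unfold each side of the claimed equality via two applications of Definition~\ref{def1}\ref{def1:c}, observe that the two resulting normal forms differ only by the associativity special congruence of $M$ (together with a pair of parallel composites of special isomorphisms in $N$), and then close the gap using \textbf{Identification II} combined with coherence of special isomorphisms in~$N$.

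By composability we have $r_2=\partial(c_1)+r_1$ and $r=\partial(c_2)+r_2$. For the left-hand side, Definition~\ref{def1}\ref{def1:c} first gives $(r_2,c_2)(r_1,c_1)=\theta_{21}(r_1,\,c_2+c_1)$, where $\theta_{21}$ is the associator-type special isomorphism $(\partial(c_2)+\partial(c_1))+r_1\approx\partial(c_2)+r_2$; composing with $(r,c)$ from the left then yields $\theta_L(r_1,\,c+(c_2+c_1))$ with
$$\theta_L\colon \bigl(\partial(c)+\partial(c_2+c_1)\bigr)+r_1 \approx \partial(c)+\bigl(\partial(c_2+c_1)+r_1\bigr) \xrightarrow{1+\theta_{21}} \partial(c)+\bigl(\partial(c_2)+r_2\bigr)=\partial(c)+r.$$
An entirely analogous computation on the right-hand side produces $((r,c)(r_2,c_2))(r_1,c_1)=\theta_R(r_1,\,(c+c_2)+c_1)$, where $\theta_R$ is the composite $(\partial(c+c_2)+\partial(c_1))+r_1\approx \partial(c+c_2)+r_2\approx\partial(c)+r$ built from two further associator-type special isomorphisms.

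Since $M$ is a c-group, there is a special associativity congruence $\varphi\colon c+(c_2+c_1)\sim(c+c_2)+c_1$ in $M$, and because $\partial$ sends special congruences to special congruences, $\varphi$ is a weak special congruence. \textbf{Identification II} applied with $\varphi$ then gives $\theta_L(r_1,\,c+(c_2+c_1))\equiv \theta_L\bigl(\partial(\varphi)+1_{r_1}\bigr)^{-1}(r_1,\,(c+c_2)+c_1)$, so it suffices to check that $\theta_L\bigl(\partial(\varphi)+1_{r_1}\bigr)^{-1}=\theta_R$ as parallel special isomorphisms from $((\partial(c)+\partial(c_2))+\partial(c_1))+r_1$ to $\partial(c)+r$. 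Both are assembled from associators and instances of $1+\theta_{21}$, and the coherence of special isomorphisms in the c-group $N$ forces any two such parallel composites to agree.

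The main obstacle is precisely this coherence verification in $N$: one must check that the pentagon-shaped diagram relating the two associator patterns commutes, and that $\partial(\varphi)+1_{r_1}$ is exactly the bridging associator reconciling $\theta_L$ with $\theta_R$. Once this is in hand, the matching of $M$-components via $\varphi$ and the invocation of \textbf{Identification II} are automatic, and the equality follows.
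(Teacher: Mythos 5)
Your proposal is correct and follows essentially the same route as the paper: expand both sides via Definition~\ref{def1}\ref{def1:c} into normal forms $\theta_L(r_1,c+(c_2+c_1))$ and $\theta_R(r_1,(c+c_2)+c_1)$, bridge the $M$-components by the associativity special congruence in $M$ (a weak special congruence), and conclude by \textbf{Identification II} together with coherence of special isomorphisms in $N$. The only cosmetic difference is that you make the final coherence check of the two parallel composites of associators explicit, where the paper simply cites the coherence property of special congruences in c-groups.
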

	
	\begin{proof}
		We compute both sides of the equality and prove that they are equal. The right-side is equal to $\psi_1 \psi (r_1,(c+c_2)+c_1)$, where $\psi$ and $\psi_1$ are the special isomorphisms
		\begin{equation*}
		((\partial(c)+\partial(c_2))+\partial(c_1))+r_1 \xrightarrow{\psi} (\partial(c)+\partial(c_2))+(\partial(c_1)+r_1)\xrightarrow{\psi_1} \partial(c)+r.	
		\end{equation*}
		Here we have in mind that since the three arrows are composible we have $\partial(c_1)+r_1=r_2$ and $\partial(c_2)+r_2=r$.
		
		The left-side of the equality is equal to $\varphi_1 \varphi (r_1,c+(c_2+c_1))$, where $\varphi$ and $\varphi_1$ are the special isomorphisms 
		\begin{equation*}
		(\partial(c)+(\partial(c_2)+\partial(c_1)))+r_1 \xrightarrow{\varphi} (\partial(c)+\partial(c_2))+(\partial(c_1)+r_1)\xrightarrow{\varphi_1} \partial(c)+r.	
		\end{equation*}
		The diagram is 
		\begin{equation*}
		\xymatrix@R=15mm@C=15mm{
		  & ((\partial(c)+\partial(c_2))+\partial(c_1))+r \ar[dr]^-{\psi_1\psi} \ar[dd]^-{\partial(\eta)+1_r} &  \\
		 r \ar[ur]^-{(r,(c+c_2)+c_1)~~} \ar[dr]_-{(r,c+(c_2+c_1))~~} &  & \partial(c)+r \\
		& (\partial(c)+(\partial(c_2)+\partial(c_1)))+r \ar[ur]_-{\varphi_1 \varphi} &  
		}
		\end{equation*}
		where $\eta: (c+c_2)+c_1 \sim c+(c_2+c_1) $ is a special congruence of associativity in $M$ and $\partial \eta$ is a special isomorphism in $\mathbb{G}_{1}$. From the \textbf{Identification II} of arrows we have commutative triangle on the left-side of the diagram. The second triangle is also commutative from the coherence property of special congruences in c-groups. Applying Definition \ref{def1} (b), this commutative diagram gives the proof.
	\end{proof}
	
	Note that associativity of composition in the general case is proved in a similar way. Here we mean that the arrows are of the type $\beta (r,c)\alpha$.
	
	\begin{prop}\label{prop:46}
		Let $(r,c),(r',c') \in \mathbb{G}_{1}$ and $r \sim r'$ be a congruence in $N$. Then there exist arrows $\varphi: r \rightarrow r'$ and $\theta:\partial(c)+r \rightarrow \partial(c')+r'$ for which the following diagram commutes
		\begin{equation*}
		\xymatrix@R=15mm@C=15mm{
		r \ar[r]^-{(r,c)} \ar[d]_-{\varphi} & \partial(c)+r \ar[d]^-{\theta} \\
		r' \ar[r]_-{(r',c')} & \partial(c')+r'
		}
		\end{equation*}
	\end{prop}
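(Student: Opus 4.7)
The plan is to use specialness of the c-crossed module to lift the given congruence $r\sim r'$ in $N$ to an element of $M$, then do the same for the induced congruence $\partial(c)+r\sim\partial(c')+r'$. With $\varphi$ and $\theta$ defined from these lifts, commutativity is then verified by a single application of \textbf{Identification II} together with coherence in $N$.

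\emph{Construction of $\varphi$.} From $r\sim r'$ we obtain $r'-r\sim 0$ in $N$ by adding $-r$. By Lemma \ref{lem1}, $\partial(0)\sim 0$, so $\partial(0)\sim r'-r$. Since $(M,N,\partial)$ is a special c-crossed module, there exists $c_0\in M$ with $c_0\sim 0$ and $\partial(c_0)=r'-r$. Let $\varepsilon\colon (r'-r)+r\to r'$ be the special isomorphism in $N$ coming from the c-group axioms. Define $\varphi:=\varepsilon(r,c_0)\colon r\to r'$ in $\mathbb{G}_1$.

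\emph{Construction of $\theta$.} By connectedness of the c-crossed module, $c\sim c'$ in $M$, and hence $\partial(c)+r\sim\partial(c')+r'$ in $N$. Applying the same construction to this congruence yields $c_2\in M$ with $c_2\sim 0$ and $\partial(c_2)=(\partial(c')+r')-(\partial(c)+r)$, together with a special isomorphism $\varepsilon_1$ with codomain $\partial(c')+r'$. Set $\theta:=\varepsilon_1(\partial(c)+r,c_2)$.

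\emph{Commutativity.} Expanding both sides using Definition \ref{def1}\ref{def1:c} gives
\[ \theta(r,c)=\tau_1(r,c_2+c), \qquad (r',c')\varphi=\tau_2(r,c'+c_0), \]
where $\tau_1,\tau_2$ are the prescribed compositions of special isomorphisms in $N$, both ending at $\partial(c')+r'$. From $c_2\sim 0\sim c_0$ and $c\sim c'$ we get a congruence $\psi\colon c_2+c\sim c'+c_0$ in $M$. Using $\partial(c_0)=r'-r$ and $\partial(c_2)=(\partial(c')+r')-(\partial(c)+r)$, the elements $\partial(c_2)+\partial(c)$ and $\partial(c')+\partial(c_0)$ in $N$ are connected by a chain of c-group axioms (associativity together with inverse cancellation), which is a special congruence. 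Thus $\psi$ is a weak special congruence. Applying \textbf{Identification II} with common $\alpha=1_r$ reduces the required equality $\theta(r,c)=(r',c')\varphi$ to $\tau_1=\tau_2(\partial(\psi)+1_r)$, an equality between two compositions of special isomorphisms in $N$ with matching source and target, which holds by coherence of the c-group $N$.

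The main obstacle lies in the last step: one must track the concrete special isomorphisms produced by Definition \ref{def1}\ref{def1:c} on each side and exhibit the weak special congruence $\psi$ that links them via \textbf{Identification II}. Once $\psi$ is identified, the matching of $\tau_1$ and $\tau_2$ is purely a coherence statement for compositions of special congruences in $N$ between the same pair of objects, which is automatic.
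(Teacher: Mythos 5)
Your proposal is correct and follows essentially the same route as the paper's proof: lift $r\sim r'$ and $\partial(c)+r\sim\partial(c')+r'$ to elements of $M$ via specialness (your $c_0,c_2$ are the paper's $c'',c_1$), expand both composites by Definition \ref{def1}\ref{def1:c}, and close the square with \textbf{Identification II} after checking that $c_2+c$ and $c'+c_0$ are weakly special congruent. The only cosmetic difference is that you derive that congruence by adding $c_2\sim 0\sim c_0$ and $c\sim c'$, whereas the paper simply invokes connectedness of $M$.
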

	\begin{proof}
		Since c-crossed module $\partial: M \rightarrow N$ is connected, we have the congruence $c \sim c'$. Since it is a special c-crossed module there exists (not necessarily unique) an element $c''$ with 
		\begin{equation}\label{eq:parc2}
		\partial(c'')=r'-r
		\end{equation}
		and $c'' \sim 0$. Let $\varepsilon:(r'-r)+r \rightarrow r'$ be the special isomorphism. Then $\varepsilon (r,c'')$ is an arrow $r \rightarrow r'$. In a similar way, we have a congruence $\partial(c) +r \sim \partial(c')+r'$ and therefore there exists an element $c_1$ with  
		\begin{equation}\label{eq:parc1}
		\partial(c_1)=(\partial(c')+r')-(\partial(c) +r),
		\end{equation}
	$c_1 \sim 0$ and we have a special isomorphism $\varphi:((\partial(c')+r')-(\partial(c) +r))+(\partial(c) +r) \rightarrow \partial(c')+r'$. Therefore there is an arrow   $\varphi(\partial(c) +r,c_1):\partial(c) +r \rightarrow \partial(c')+r'$. We take $\psi=\varepsilon (r,c'')$ and $\theta=\varphi(\partial(c) +r,c_1) $; and prove that the diagram is commutative. The compositions in the diagram are respectively equal to $\chi_1 \chi(r,c'+c'')$ and $\sigma_1 \sigma (r,c_1+c)$ where $\chi_1 \chi$, $\sigma_1 \sigma$ are the following special isomorphisms 
	\begin{equation*}
	(\partial(c')+\partial(c''))+r \xrightarrow{\chi} \partial(c')+(\partial(c'')+r)\xrightarrow{\chi_1} \partial(c')+r',	
	\end{equation*}
	\begin{equation*}
	(\partial(c_1)+\partial(c))+r \xrightarrow{\sigma } \partial(c_1)+(\partial(c)+r)\xrightarrow{\sigma_1} \partial(c')+r'.	
	\end{equation*}
	 Here we applied \eqref{eq:parc2} and \eqref{eq:parc1}. Since $M$ is connected we have a congruence $c'+c''\sim c_1+c$. Moreover $\partial(c')+\partial(c'')=\partial(c')+(r'-r)$ and $\partial(c_1)+\partial(c) $ is special congruent to $\partial(c')+(r'-r)$; and therefore  $c'+c''$ is weakly special congruent to $c_1+c$. On the base of \textbf{Identification II} the diagram commutes.
	\end{proof}
	\begin{cor}\label{cor:2}
		Let $\beta (r,c)\alpha$ and $ \beta' (r',c')\alpha' \in \mathbb{G}_{1}$, where $\mathsf{dom}(\alpha)=\mathsf{dom}(\alpha')$ and $\mathsf{codom}(\beta)=\mathsf{codom}(\beta')$. If $c\sim c'$ is a weak special congruence, then $\beta (r,c)\alpha=\beta' (r',c')\alpha'$.

	\end{cor}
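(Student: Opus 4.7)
The plan is to reduce the statement to Proposition \ref{prop:46} together with Identification I. Set $r_1 = \mathsf{dom}(\alpha) = \mathsf{dom}(\alpha')$ and $r_2 = \mathsf{codom}(\beta) = \mathsf{codom}(\beta')$. Since $\alpha\colon r_1\to r$ and $\alpha'\colon r_1\to r'$ are special isomorphisms we obtain a special congruence $r\sim r'$ in $N$, and similarly $\beta,\beta'$ force a special congruence $\partial(c)+r \sim \partial(c')+r'$; the hypothesis that $c\sim c'$ is a weak special congruence (equivalently, that $\partial(c)\sim \partial(c')$ is special) is consistent with this and is exactly what is needed for Proposition \ref{prop:46} to apply to the pair $(r,c), (r',c')$.

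First, I would apply Proposition \ref{prop:46} to $(r,c)$ and $(r',c')$ to produce special isomorphisms $\psi\colon r\to r'$ and $\theta\colon \partial(c)+r \to \partial(c')+r'$ satisfying $\theta\,(r,c) = (r',c')\,\psi$ in $\mathbb{G}_1$. Using associativity (Proposition \ref{prop:4.6} and the extension to the general form $\beta(r,c)\alpha$ remarked after its proof) together with Definition \ref{def1} \ref{def1:a}--\ref{def1:c}, I would rewrite
\begin{equation*}
\beta (r,c)\alpha \;=\; (\beta\theta^{-1})\,\theta(r,c)\,\alpha \;=\; (\beta\theta^{-1})\,(r',c')\,(\psi\alpha),
\end{equation*}
where $\theta^{-1}$ is the special isomorphism inverse to $\theta$, and $\beta\theta^{-1}$ and $\psi\alpha$ are composites of special isomorphisms, hence again special isomorphisms by Definition \ref{def1} \ref{def1:a}.

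Next I would invoke Identification I to collapse the two flanking special isomorphisms: $\psi\alpha$ and $\alpha'$ are both special isomorphisms from $r_1$ to $r'$, so by Corollary \ref{cor1} together with the uniqueness in Lemma \ref{lem2} both are identified with the unique arrow of the form $\varepsilon(r_1,c''')$, where $c'''$ is the unique element weakly special congruent to $0$ with $\partial(c''')=r'-r_1$; therefore $\psi\alpha = \alpha'$ in $\mathbb{G}_1$. Symmetrically $\beta\theta^{-1} = \beta'$, and substitution yields $\beta(r,c)\alpha = \beta'(r',c')\alpha'$, as required. The step I expect to be most delicate is justifying the insertion $\theta^{-1}\theta$ inside a composite of the mixed form $\beta(r,c)\alpha$: this relies on the general associativity remarked after Proposition \ref{prop:4.6} and on the fact that, under Identification I, the composite of a special isomorphism with its inverse collapses to the identity arrow at $\partial(c)+r$, which is genuinely composable on either side within $\mathbb{G}_1$.
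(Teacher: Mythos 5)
Your proposal is correct and takes essentially the same route as the paper: Proposition \ref{prop:46} supplies the commuting middle square $\theta(r,c)=(r',c')\psi$, and the coherence of special isomorphisms in $\mathbb{G}_1$ (which you instantiate via the uniqueness in Identification I and Corollary \ref{cor1}) collapses the two flanking triangles to give $\psi\alpha=\alpha'$ and $\beta\theta^{-1}=\beta'$. The paper records exactly this argument as a single hexagonal diagram, so your write-up is just a more explicit version of the same proof.
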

	\begin{proof}
		The proof follows from Proposition \ref{prop:46} and the coherence property of special isomorphisms in $\mathbb{G}_{1}$ as in the diagram 
		\begin{equation*}
		\xymatrix@R=10mm@C=15mm{
			& r \ar[r]^-{(r,c)} \ar[dd] & \partial(c)+r \ar[dr]^-{\beta} \ar[dd] &  \\
			~~~~~\bullet~~~~~ \ar[ur]^-{\alpha} \ar[dr]_-{\alpha'} &  &  &  \bullet \\
			& r' \ar[r]_-{(r',c')} & \partial(c')+r' \ar[ur]_-{\beta'} &  
		}
		\end{equation*}
%
%
		
\end{proof}
\section{Identity arrows and invertibility of arrows in $\mathbb{G}_1$ }
	
For each object $r \in\mathbb{G}_0$, we define an arrow $\varepsilon (r,0)$, where $(r,0): r \rightarrow \partial(0)+r$ is an arrow in $\mathbb{G}_1$ and $\varepsilon:\partial(0)+r \rightarrow r $ is a special isomorphism by means of the special isomorphism $\partial(0)\approx 0$. We prove that $\varepsilon (r,0):r \rightarrow r $ is an identity arrow.
\begin{lem}
For any $ (r_1,c)$ with $\partial(c)+r_1=r$ and any $(r,c_1)\in \mathbb{G}_1$ there are equalities
\begin{equation*}
	\varepsilon (r,0)(r_1,c)=(r_1,c),	
\end{equation*}
\begin{equation*}
	(r,c_1)\varepsilon (r,0)=(r,c_1).	
\end{equation*}
\end{lem}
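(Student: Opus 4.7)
The plan is to compute each composite via the composition rule in Definition \ref{def1}\ref{def1:c} and then collapse the leftover piece to the identity by combining \textbf{Identification II} with the coherence of special isomorphisms.

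For the first equality, write $(r_1,c)$ as $1_{\partial(c)+r_1}(r_1,c)1_{r_1}$ and $\varepsilon(r,0)$ as $\varepsilon(r,0)1_r$. Since $\partial(c)+r_1 = r$ by hypothesis, the codomain of $1_{\partial(c)+r_1}$ equals the domain of $1_r$, so Definition \ref{def1}\ref{def1:c} gives
$$\varepsilon(r,0)\,(r_1,c) \;=\; \theta(r_1,\,0+c),$$
where $\theta$ is the composite of special isomorphisms
$$(\partial(0)+\partial(c))+r_1 \;\approx\; \partial(0)+(\partial(c)+r_1) \;=\; \partial(0)+r \;\overset{\varepsilon}{\approx}\; r.$$
Now invoke the special (hence weak special) congruence $\varphi\colon 0+c \sim c$ in $M$ supplied by the zero axiom. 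By \textbf{Identification II} applied to $\varphi$,
$$\theta(r_1,\,0+c) \;=\; \theta'(r_1,\,c), \qquad \theta \;=\; \theta'\circ(\partial(\varphi)+1_{r_1}).$$
The factor $\partial(\varphi)+1_{r_1}$ has domain $(\partial(0)+\partial(c))+r_1$ and codomain $\partial(c)+r_1 = r$, so $\theta'\colon r \to r$ is a composite of special isomorphisms. By coherence of special isomorphisms, any such self-composite must coincide with $1_r$, and therefore $\varepsilon(r,0)(r_1,c)=(r_1,c)$.

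The second equality is handled symmetrically. Writing the composite out through Definition \ref{def1}\ref{def1:c} yields
$$(r,c_1)\,\varepsilon(r,0) \;=\; \theta(r,\,c_1+0),$$
where now $\theta$ is the composite of special isomorphisms
$$(\partial(c_1)+\partial(0))+r \;\approx\; \partial(c_1)+(\partial(0)+r) \;\overset{1+\varepsilon}{\approx}\; \partial(c_1)+r.$$
Applying \textbf{Identification II} with the special congruence $c_1+0 \sim c_1$ in $M$ replaces the second slot $c_1+0$ by $c_1$, leaving a residual special self-isomorphism $\theta'\colon \partial(c_1)+r \to \partial(c_1)+r$, which coherence forces to be $1_{\partial(c_1)+r}$. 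Hence $(r,c_1)\,\varepsilon(r,0)=(r,c_1)$.

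The only subtle point is the concluding appeal to coherence: one must be sure in each case that the hypothesis ($\partial(c)+r_1 = r$ in the first equality; the special congruence $\partial(0)\approx 0$ from Lemma \ref{lem1} embedded in $\varepsilon$ in the second) really does make the leftover arrow a self-map built entirely from special isomorphisms, so that coherence applies. Everything else is a mechanical unwinding of the composition rule and of the two identifications.
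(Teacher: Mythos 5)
Your computation of the composite via Definition \ref{def1}\ref{def1:c}, followed by \textbf{Identification II} applied to the special congruence $0+c\sim c$ (resp.\ $c_1+0\sim c_1$) and coherence to kill the residual special self-isomorphism, is exactly the argument the paper gives (its commutative square is your factorization $\theta=\theta'\circ(\partial(\varphi)+1_{r_1})$ drawn as a diagram). The proposal is correct and matches the paper's proof.
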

\begin{proof}
From the definition of composition in $\mathbb{G}_1$ we have $\varepsilon (r,0)(r_1,c)=\varepsilon \varepsilon_1 (r_1,0+c)$, where $ \varepsilon_1:(\partial(0)+\partial(c))+r_1 \rightarrow \partial(0)+r $ is a special isomorphism by $\partial(c)+r_1=r$. Moreover, we have the special congruence $0+c\sim c$. By the Identification II we obtain commutative diagram 
		\begin{equation*}
			\xymatrix@R=15mm@C=15mm{
				r_1 \ar[r]^-{(r_1,c)} \ar[d]_-{(r_1,0+c)} & \partial(c)+r_1=r  \\
				(\partial(0)+\partial(c))+r_1 \ar[r]_-{\varepsilon_1} & \partial(0)+r \ar[u]_-{\varepsilon}
			}
		\end{equation*}
		which proves the first equality. The second equality is proved in a similar way.
\end{proof}
	
Hence we have a map $i:\mathbb{G}_0\rightarrow \mathbb{G}_1 $ defined by $i(r)=\varepsilon(r,0)$, which is the identity arrow $1_r$ for any object $r$ in $\mathbb{G}_0$.
		
For any arrow $\beta (r,c)\alpha \in \mathbb{G}_1$ define an arrow $\alpha^{-1} (r,c)^{-1}\beta^{-1}$, where $(r,c)^{-1}=\varphi(\partial(c)+r,-c)$, and $\varphi:\partial(-c)+(\partial(c)+r)\rightarrow r$ is a special isomorphism.
		
The pictures are as follows 
\begin{equation}\label{eq:4} 
\xymatrix@C=15mm{r' \ar[r]^-{\alpha} & r\ar[r]^-{(r,c)} & \partial(c)+r \ar[r]^-{\beta} & r'' }	
\end{equation} 
\begin{equation}\label{eq:5}
\xymatrix@C=17mm{
r'' \ar[r]^-{\beta^{-1}} & \partial(c)+r \ar[r]^-{(\partial(c)+r,-c)} & \partial(-c)+(\partial(c)+r) \ar[r]^-{\varphi} & r \ar[r]^-{\alpha^{-1}} & r' 
}
\end{equation}
		
\begin{prop}
For any $\beta (r,c)\alpha \in \mathbb{G}_1$ we have 
\begin{equation*}
(\beta (r,c)\alpha)(\alpha^{-1}\varphi(\partial(c)+r,-c)\beta^{-1})=1_{r''},
\end{equation*}
\begin{equation*}
(\alpha^{-1}\varphi(\partial(c)+r,-c)\beta^{-1})(\beta (r,c)\alpha)=1_{r'}.	
\end{equation*}
\end{prop}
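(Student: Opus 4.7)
The plan is to compute each side by direct application of Definition~\ref{def1}\ref{def1:c}, and then to collapse the resulting arrow to the correct identity by invoking Corollary~\ref{cor:2}.

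For the first equality, I match $\alpha^{-1}\varphi(\partial(c)+r,-c)\beta^{-1}$ to the inner (first-applied) factor of the template in Definition~\ref{def1}\ref{def1:c} (with $\alpha\!\leftrightarrow\!\beta^{-1}$, $(r,c)\!\leftrightarrow\!(\partial(c)+r,-c)$, $\beta\!\leftrightarrow\!\alpha^{-1}\varphi$) and $\beta(r,c)\alpha$ to the outer (second-applied) factor. Composability holds since $\alpha^{-1}\varphi$ ends at $r'$ and $\alpha$ starts at $r'$. Substitution in the formula yields
\[
(\beta(r,c)\alpha)\bigl(\alpha^{-1}\varphi(\partial(c)+r,-c)\beta^{-1}\bigr) = \theta\,(\partial(c)+r,\,c+(-c))\,\beta^{-1},
\]
where $\theta$ is the special isomorphism
\[
(\partial(c)+\partial(-c))+(\partial(c)+r) \approx \partial(c)+(\partial(-c)+(\partial(c)+r)) \xrightarrow{1+\alpha^{-1}\varphi} \partial(c)+r' \xrightarrow{1+\alpha} \partial(c)+r \xrightarrow{\beta} r''.
\]

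Next, I verify that $c+(-c)\sim 0$ is a weak special congruence. By the c-group axiom~\ref{def:cgriii}, $c+(-c)\sim 0$ is a special congruence in $M$, so applying the c-group morphism $\partial$ together with Lemma~\ref{lem1} produces the chain of special congruences $\partial(c)+\partial(-c) = \partial(c+(-c)) \sim \partial(0) \sim 0$ in $N$; by Definition~\ref{weak special} this is exactly a weak special congruence. Since $\theta(\partial(c)+r,c+(-c))\beta^{-1}$ and the identity $1_{r''} = \varepsilon(r'',0)$ (written in the template form $\varepsilon(r'',0)\cdot 1_{r''}$) are both arrows from $r''$ to $r''$, Corollary~\ref{cor:2} applies and equates the composite with $1_{r''}$.

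The second equality is handled by the symmetric computation: Definition~\ref{def1}\ref{def1:c} now yields
\[
\bigl(\alpha^{-1}\varphi(\partial(c)+r,-c)\beta^{-1}\bigr)(\beta(r,c)\alpha) = \theta'\,(r,\,(-c)+c)\,\alpha,
\]
in which the consecutive factors $1+\beta$ and $1+\beta^{-1}$ appearing inside $\theta'$ collapse to a special identity. The weak special congruence $(-c)+c\sim 0$ is established as in the preceding paragraph, and Corollary~\ref{cor:2} identifies the result with $\varepsilon'(r',0) = 1_{r'}$.

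The only real obstacle is bookkeeping: one must carefully match each long composite against the schematic $\beta(r,c)\alpha$ of Definition~\ref{def1}\ref{def1:c}, track the associativity shuffles and the inverse cancellations $\alpha\alpha^{-1}$ and $\beta\beta^{-1}$, and confirm that the emergent $\theta$ is a special isomorphism with source $\partial(c+(-c))+(\partial(c)+r)$ (respectively $\partial((-c)+c)+r$). Once that is in hand, the reduction to the identity is automatic from the facts that $c+(-c)$ and $(-c)+c$ are specially congruent to $0$ in $M$ and that $\partial$ transports this to a special congruence in $N$.
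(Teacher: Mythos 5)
Your proof is correct and follows essentially the same route as the paper: compute the composite via Definition~\ref{def1}(c) to get $\theta(\partial(c)+r,c+(-c))\beta^{-1}$ (resp. $\theta'(r,(-c)+c)\alpha$), observe that $c+(-c)\sim 0$ is a (weak) special congruence, and collapse to the identity arrow. The only difference is cosmetic: you invoke Corollary~\ref{cor:2} to finish, whereas the paper redoes the underlying \textbf{Identification II} plus coherence diagram explicitly; these are the same argument.
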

	
\begin{proof}
By the definition of composition, we obtain that the left-side of the first equality is equal to 
\begin{equation*}
\beta\xi(\partial(c)+r,c-c)\beta^{-1}	
\end{equation*}
where $\xi:\partial(c-c)+ (\partial(c)+r)\rightarrow\partial(c)+r$ is a special isomorphism and $\beta$ is as given in \eqref{eq:4}. We have the following diagram 
		
\begin{equation*} 
	\xymatrix@R=19mm@C=15mm{
	r'' \ar[r]^-{\beta^{-1}} & \partial(c)+r \ar[r]^-{(\partial(c)+r,c-c)} \ar[d]^-{(\partial(c)+r,0)} 
	\ar@/_4pc/[dd]_-{1_{\partial(c)+r}}  & \partial(c-c)+(\partial(c)+r) \ar[d]^-{\xi} \ar[dl]^-{\varphi+1_{\partial(c)+r}} \\
	& \partial(0)+(\partial(c)+r) \ar[d]_-{\varepsilon} & \partial(c)+r \ar[d]^-{\beta} \ar@{=}[dl]^-{1_{\partial(c)+r}} \\
	& \partial(c)+r \ar[r]_-{\beta} & r''
	}
\end{equation*}
For the arrows $(\partial(c)+r,c-c)$ and $(\partial(c)+r,0)$ we have the special congruence $c-c\sim 0$ and therefore a special isomorphism $\varphi:\partial(c-c)\rightarrow \partial (0)$. From  \textbf{Identification II}, it follows that the upper triangular diagram commutes. In the diagram, $\varepsilon$ is a special isomorphism and the left vertical composition is $1_{\partial(c)+r}$. From the coherence property of special congruences in c-groups, and therefore from the coherence of special isomorphisms in $\mathbb{G}_1$, we conclude that the diagram is commutative, which proves the first equality of the proposition.
		
In a similar way the second equality can be proved.
\end{proof}

We now prove that the inverse of an arrow does not depend on the choise of representatives.

\begin{prop}
If $\alpha: r\rightarrow r'$ is a special isomorphism and $\alpha \equiv\varepsilon(r,c')$ by \textbf{Identification I}, where $\partial(c')=r'-r$ and $\varepsilon$ is a special isomorphism $(r'-r)+r\rightarrow r'$, then $\alpha^{-1}=(\varepsilon(r,c'))^{-1}$.
\end{prop}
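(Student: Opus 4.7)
The plan is to compute the left-hand side via Identification~I and then to invoke Corollary~\ref{cor:2} to conclude equality with the right-hand side. First, I would apply Identification~I to the special isomorphism $\alpha^{-1}\colon r'\to r$: this produces a unique element $c''\in M$ with $c''\sim 0$ weakly specially and $\partial(c'')=r-r'$, together with a special isomorphism $\varepsilon'\colon(r-r')+r'\to r$, such that $\alpha^{-1}\equiv\varepsilon'(r',c'')$ in $\mathbb{G}_1$.

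Next, I would show that $c''\sim -c'$ is a weak special congruence in $M$. Since $c'\sim 0$ is weakly special, the c-group morphism $\partial$ yields the special congruences $\partial(-c')\sim -\partial(c')=-(r'-r)\sim r-r'$ and $\partial(-c')\sim 0\sim\partial(0)$, so that $-c'\sim 0$ is also weakly special. By the specialness of the c-crossed module $\partial$, there exists an element weakly specially congruent to $-c'$ whose image under $\partial$ is exactly $r-r'$; by the uniqueness clause in Lemma~\ref{lem2}, this element must coincide with $c''$, and therefore $-c'\sim c''$ is a weak special congruence.

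Finally, I would observe that the constructed inverse $\varphi(\partial(c')+r,-c')\varepsilon^{-1}$ and the arrow $\varepsilon'(r',c'')$ both have source $r'$ and target $r$: on the former, $\mathsf{dom}(\varepsilon^{-1})=r'$ and $\mathsf{codom}(\varphi)=r$; on the latter, $\mathsf{dom}(1_{r'})=r'$ and $\mathsf{codom}(\varepsilon')=r$. Their inner labels $-c'$ and $c''$ are weakly specially congruent by the previous step, so Corollary~\ref{cor:2} applies and delivers $\varepsilon'(r',c'')=\varphi(\partial(c')+r,-c')\varepsilon^{-1}$; combined with $\alpha^{-1}\equiv\varepsilon'(r',c'')$ from the first step, this gives $\alpha^{-1}=(\varepsilon(r,c'))^{-1}$. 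The main obstacle is the second step: since $\partial(-c')=r-r'$ only up to special congruence rather than on the nose, one genuinely has to invoke specialness of $\partial$ and the uniqueness from Lemma~\ref{lem2} to produce the precise weak special congruence $c''\sim -c'$ needed as input to Corollary~\ref{cor:2}.
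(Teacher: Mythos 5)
Your proposal is correct and follows essentially the same route as the paper's own proof: apply \textbf{Identification I} to $\alpha^{-1}$, unfold $(\varepsilon(r,c'))^{-1}$ via the definition of inverse arrow, establish that $-c'$ and the element representing $\alpha^{-1}$ are weakly specially congruent (both having $\partial$-images specially congruent to $r-r'$), and conclude by the coherence of special isomorphisms as packaged in Corollary~\ref{cor:2}. The only cosmetic difference is that the paper obtains the congruence $-c'\sim c''$ directly from connectedness of $M$ rather than through the uniqueness clause of Lemma~\ref{lem2}, but both arguments rest on the same computation of $\partial$.
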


	\begin{proof}
		By the definition of inverse arrow $(\varepsilon(r,c'))^{-1}=\varphi(\partial(c')+r,-c')\varepsilon^{-1}$, where $\varphi:\partial(-c')+(\partial(c')+r) \rightarrow r$ is a special isomorphism. Since $\alpha^{-1}$ is a special isomorphism, by \textbf{Identification I} we have $\alpha^{-1}=(\bar{\varepsilon}(r',\Bar{c_1}))$, where $\partial\Bar{c_1}=r-r'$ and $\bar{\varepsilon}:\partial\Bar{c_1}+r'\rightarrow r $ is a special isomorphism. We have to show that $\varphi(\partial(c')+r,-c')\varepsilon^{-1}=\bar{\varepsilon}(r',\Bar{c_1}')$. We have the special isomorphism $\varepsilon^{-1}:r' \rightarrow \partial(c')+r $; and moreover $-c'\sim\Bar{c_1}'$, since $M$ is connected. $\partial( -c')$ is a special congruent to $\Bar{c_1}'$, since both are special congruent to $r-r'$. Like in Proposition \ref{prop:46} the following diagram commutes
		\begin{equation*}
			\xymatrix@R=10mm@C=20mm{
				r' \ar[r]^-{(r',\overline{c}_{1}')} \ar[dd]_-{\varepsilon^{-1}} & \partial(\overline{c}_1')+r' \ar[d]^-{\overline{\varepsilon}} \\
				 & r \ar[d]^-{\varphi^{-1}} \\
				\partial(c')+r \ar[r]_-{(\partial(c')+r,-c')} & \partial(-c')+(\partial(c')+r) 
			}
		\end{equation*}
		which proves the equality.
	\end{proof}
	
\begin{prop}
If $\varphi:c\sim c'$ is a weak special congruence, then according to diagram \eqref{diag:31} for any $r \in \mathbb{G}_0$ we have 
\begin{equation*}
(\beta' (r,c')\alpha)^{-1}=(\beta (r,c)\alpha)^{-1}.
\end{equation*}
\end{prop}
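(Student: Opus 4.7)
The plan is to reduce the claim to Corollary \ref{cor:2} by rewriting each inverse in the standard form $\tilde{\beta}(\tilde r,\tilde c)\tilde{\alpha}$ and verifying the three hypotheses of that corollary: matching domain, matching codomain, and a weak special congruence between the ``middle'' elements of $M$.

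First, I would unwind the definitions from \eqref{eq:4}--\eqref{eq:5}. The arrow $(\beta(r,c)\alpha)^{-1} = \alpha^{-1}\varphi_c(\partial(c)+r,-c)\beta^{-1}$ is of the form $\tilde{\beta}(\tilde r,\tilde c)\tilde{\alpha}$ with $\tilde r = \partial(c)+r$, $\tilde c = -c$, $\tilde{\alpha} = \beta^{-1}$ and $\tilde{\beta} = \alpha^{-1}\varphi_c$, where $\varphi_c\colon \partial(-c)+(\partial(c)+r) \to r$ is the special isomorphism given by Corollary \ref{cor1}. Similarly $(\beta'(r,c')\alpha)^{-1} = \alpha^{-1}\varphi_{c'}(\partial(c')+r,-c')\beta'^{-1}$ has the analogous form with $\tilde r' = \partial(c')+r$ and $\tilde c' = -c'$.

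Next, I would verify that the two arrows share the same domain and codomain. Both have domain $\mathsf{codom}(\beta) = \mathsf{codom}(\beta')$ (this equality is built into the hypothesis coming from diagram \eqref{diag:31}) and both have codomain $\mathsf{dom}(\alpha)$. So the matching condition of Corollary \ref{cor:2} on the outer arrows is immediate.

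The key step is to show that $-c \sim -c'$ is a weak special congruence in $M$, i.e. that $\partial(-c) \sim \partial(-c')$ is a special congruence in $N$. By hypothesis $c \sim c'$ is a weak special congruence, so $\partial(c) \sim \partial(c')$ is special in $N$; inverting (since in a c-group the inverse of a special congruence is special) gives $-\partial(c) \sim -\partial(c')$ as a special congruence. Combined with the special congruences $\partial(-c) \sim -\partial(c)$ and $-\partial(c') \sim \partial(-c')$ (which hold because $\partial$ is a c-group morphism, hence carries the special congruence $-c + c \sim 0$ to one witnessing $\partial(-c) \sim -\partial(c)$), we obtain the desired special congruence $\partial(-c) \sim \partial(-c')$ by composition.

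With all three hypotheses in place, Corollary \ref{cor:2} yields the equality of the two inverses. The only subtle point — and the one I would check carefully before writing — is the passage from $\partial(c) \sim \partial(c')$ to $-\partial(c) \sim -\partial(c')$ as a special congruence, since ``special'' is defined by belonging to the distinguished class generated from the axioms of a c-group; this should be routine from the properties of special congruences summarised in Section 2, and no further manipulation of $\varphi_c$, $\varphi_{c'}$ or of the $\beta,\beta'$ relation from \eqref{diag:31} is needed thanks to the generality of Corollary \ref{cor:2} (which allows $\tilde r \neq \tilde r'$).
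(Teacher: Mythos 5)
Your proposal matches the paper's own argument: the paper likewise rewrites both inverses in the form $(\alpha^{-1}\eta)(\partial(c)+r,-c)\beta^{-1}$ and $(\alpha^{-1}\eta')(\partial(c')+r,-c')\beta'^{-1}$ using composition rule (b), observes that $-c\sim-c'$ is a weak special congruence and that $\partial(c)+r\approx\partial(c')+r$ is special, and concludes by Corollary \ref{cor:2}. Your added justification that $\partial(-c)\sim\partial(-c')$ is special (via $\partial(-c)\sim-\partial(c)$ and inversion of the special congruence $\partial(c)\sim\partial(c')$) is a detail the paper leaves implicit, but the route is the same.
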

\begin{proof}
For the right-side of the equality we have 
\begin{equation*}
(\beta (r,c)\alpha)^{-1}=\alpha^{-1}(\eta(\partial(c)+r,-c)\beta^{-1})	
\end{equation*}
where $\eta:\partial(-c)+(\partial(c)+r)\rightarrow r$ is a special isomorphism. For the left-side of the equality we have
\begin{equation*}
(\beta' (r,c')\alpha)^{-1}=\alpha^{-1}(\eta'(\partial(c')+r,-c')\beta'^{-1})	
\end{equation*}
where $\eta':\partial(-c')+(\partial(c')+r)\rightarrow r$ is a special isomorphism. By diagram \eqref{diag:31} we have the following picture
\begin{equation*}
\xymatrix@R=15mm@C=25mm{
	& \partial(c)+r \ar[r]^-{(\partial(c) +r,-c)} \ar[dd] & \partial(-c)+(\partial(c) +r)\ar[dr]^-{\eta} \ar[dd] &  &  \\
	r'' \ar[ur]^-{\beta^{-1}} \ar[dr]_-{{\beta'}^{-1}} &  &  & r \ar[r]^-{\alpha^{-1}}& r' \\
	& \partial(c')+r \ar[r]_-{(\partial(c')+r,-c')} & \partial(-c')+(\partial(c')+r) \ar[ur]_-{\eta'} &  & 
	}
\end{equation*}
%
		
By the definition of composition (b) we have
\begin{equation*}
\alpha^{-1}(\eta(\partial(c)+r,-c)\beta^{-1})=(\alpha^{-1}\eta)(\partial(c)+r,-c)\beta^{-1},	
\end{equation*}
\begin{equation*}
\alpha^{-1}(\eta'(\partial(c')+r,-c')\beta'^{-1})=(\alpha^{-1}\eta')(\partial(c')+r,-c')\beta'^{-1}.	
\end{equation*}
Since $\partial(c)+r\approx \partial(c')+r$ is a special isomorphism and $-c\sim-c'$ is a weak special congruence, by Corollary \ref{cor:2} we obtain the equality. 
\end{proof}
	
\section{Addition operation, the zero element and the opposite arrows in $\mathbb{G}_1$}
	
For any arrows $\beta_1(r_1,c_1)\alpha_1$ and $ \beta_2(r_2,c_2)\alpha_2\in\mathbb{G}_1$ their sum is defined by
\begin{equation*}
\beta_1(r_1,c_1)\alpha_1+\beta_2(r_2,c_2)\alpha_2=(\beta_1+\beta_2)\theta(r_1+r_2,c_1+r_1\cdot c_2)(\alpha_1+\alpha_2),
\end{equation*}
where 
\begin{equation*}
\theta:\partial(c_1)+(r_1+(\partial(c_2)-r_1))+r_1+r_2\longrightarrow(\partial(c_1)+r_1)+(\partial(c_2)+r_2)	
\end{equation*}
is the special isomorphism. That can be pictured as 
\begin{equation*}
	\xymatrix@C=25mm{r_1' \ar[r]^{\alpha_1} & r_1 \ar[r]^-{(r_1,c_1)} & \partial(c_1)+r_1 \ar[r]^-{\beta_1} & r_1''}
\end{equation*}
\begin{equation*}
	\xymatrix@C=25mm{r_2' \ar[r]^{\alpha_2} & r_2 \ar[r]^-{(r_2,c_2)} & \partial(c_2)+r_2 \ar[r]^-{\beta_2} & r_2''}
\end{equation*}
\begin{equation*}
	\xymatrix@C=30mm@R=25mm{r_1'+r_2' \ar[d]_-{\alpha_1+\alpha_2} \ar[rr]^{\beta_1(r_1,c_1)\alpha_1+\beta_2(r_2,c_2)\alpha_2} & & r_1''+r_2''\\
	r_1+r_2 \ar[r]_-{(r_1+r_2,c_1+r_1\cdot c_2)} & \partial(c_1)+(r_1+(\partial(c_2)-r_1))+r_1+r_2 \ar[r]_-{\theta} & (\partial(c_1)+r_1)+(\partial(c_2)+r_2) \ar[u]_-{\beta_1+\beta_2} }
\end{equation*}

Let $\alpha\colon r\rightarrow r'$ be a special isomorphism in $\mathbb{G}_1$. By \textbf{Identification I}, we have $\alpha\equiv\varepsilon(r,c)$, where $c\sim 0$ is a weak special congruence with $\partial(c)=r'-r$ and $\varepsilon\colon (r'-r)+r\rightarrow r'$ is the special isomorphism. For any $\gamma(r',c')\beta\in\mathbb{G}_1$ the sum $\alpha+\gamma(r',c')\beta$  is defined to be $\varepsilon(r,c)+\gamma(r',c')\beta$. Similarly, $\gamma(r',c')\beta+\alpha$ is defined to be $\gamma(r',c')\beta+\varepsilon(r,c)$.
	
Now we prove that the addition operation in $\mathbb{G}_1$ does not depend on the choice of representatives. In the case of \textbf{Identification I} of arrows we do not need to prove anything for the definitions of the sums $\alpha+\gamma(r',c')\beta$ and $\gamma(r',c')\beta+\alpha$.
	
\begin{prop}
Let $\alpha\colon r\rightarrow r'$ and $\beta\colon r_1\rightarrow r_1'$ be the special isomorphisms. Then we have the special isomorphism $\alpha+\beta\colon r+r_1\rightarrow r'+r_1'$, which corresponds to the special congruence $\alpha+\beta$ in $M$. We have the identifications $\alpha\equiv\varepsilon(r,c)$, $\beta\equiv\varepsilon_1(r_1,c_1)$ and $\alpha+\beta\equiv\overline{\varepsilon}(r+r_1,\overline{c})$, where $\partial(c)=r'-r$, $\partial(c_1)=r_1'-r_1$, $\partial(\overline{c})=(r'+r_1')-(r+r_1)$, $\varepsilon\colon(r'-r)+r\rightarrow r'$, $\varepsilon_1\colon (r'-r_1)+r_1\rightarrow r_1'$ and $\overline{\varepsilon}\colon((r'+r_1')-(r+r_1))+(r+r_1)\rightarrow r'+r_1'$ are the special isomorphisms. Moreover $c,c_1$  and $\overline{c}$ are weakly special congruent to $0$. Then we have the equality 
\begin{equation*}
\overline{\varepsilon}(r+r_1,\overline{c})=\varepsilon(r,c)+\varepsilon_1(r_1,c_1).
\end{equation*}
\end{prop}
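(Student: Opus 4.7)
The plan is to apply the definition of addition to unfold the right-hand side, and then match it with the left-hand side via \textbf{Identification II} combined with the coherence of special isomorphisms. Unfolding the sum (with both leading components equal to identities) gives
\[
\varepsilon(r,c)+\varepsilon_1(r_1,c_1)=(\varepsilon+\varepsilon_1)\,\theta\,(r+r_1,\,c+r\cdot c_1),
\]
where $\theta:\partial(c)+(r+(\partial(c_1)-r))+r+r_1\rightarrow(\partial(c)+r)+(\partial(c_1)+r_1)$ is the special isomorphism from Section 5. So the task reduces to proving
\[
\overline{\varepsilon}(r+r_1,\overline{c})=(\varepsilon+\varepsilon_1)\,\theta\,(r+r_1,\,c+r\cdot c_1).
\]

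First I would exhibit the required weak special congruence $\varphi:\overline{c}\sim c+r\cdot c_1$ in $M$. Since all three elements $c$, $c_1$, $\overline{c}$ are weakly special congruent to $0$ and $M$ is connected, the action axioms yield $c+r\cdot c_1\sim 0$, which combined with $\overline{c}\sim 0$ gives the congruence at the level of $M$. To upgrade it to a \emph{weak} special congruence I would compute $\partial(c+r\cdot c_1)=\partial(c)+\partial(r\cdot c_1)$ and use the first c-crossed module axiom $\partial(r\cdot c_1)=r+\partial(c_1)-r$, so that $\partial(c+r\cdot c_1)=(r'-r)+(r+(r_1'-r_1)-r)$, which is special congruent in $N$ to $(r'+r_1')-(r+r_1)=\partial(\overline{c})$ by coherence of special congruences in the c-group $N$.

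Next I would apply \textbf{Identification II} with this $\varphi$: the arrow $\overline{\varepsilon}(r+r_1,\overline{c})$ coincides with $\overline{\varepsilon}(\partial(\varphi)+1_{r+r_1})^{-1}(r+r_1,c+r\cdot c_1)$. What remains is the identification of two special isomorphisms with the same domain $\partial(c+r\cdot c_1)+(r+r_1)$ and the same codomain $r'+r_1'$, namely $\overline{\varepsilon}(\partial(\varphi)+1_{r+r_1})^{-1}$ on the one side and $(\varepsilon+\varepsilon_1)\theta$ on the other; coherence of special isomorphisms in $\mathbb{G}_1$ forces these to be equal, which finishes the proof.

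The main obstacle is really Step two: producing $\varphi$ as a \emph{weak} special congruence. The equality of $\partial$-images in $N$ is not definitional but relies on both the crossed module axiom (to rewrite $\partial(r\cdot c_1)$) and the associativity/cancellation coherence in the c-group $N$; once this is in place, the rest is a direct invocation of \textbf{Identification II} and coherence, as in the pattern already used in Proposition \ref{prop:46} and Corollary \ref{cor:2}.
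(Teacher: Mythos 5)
Your proof is correct and follows essentially the same route as the paper: unfold the sum via the definition, establish the weak special congruence $\overline{c}\sim c+r\cdot c_1$ by computing $\partial(c+r\cdot c_1)=(r'-r)+(r+(r_1'-r_1)-r)$ through the crossed module axiom and coherence in $N$, and then conclude by \textbf{Identification II} together with coherence of special isomorphisms. No substantive differences from the paper's argument.
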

\begin{proof}
		According to the definition of sum in $\mathbb{G}_1$ we have 
		\begin{equation*}
			\varepsilon(r,c)+\varepsilon_1(r_1,c_1)=(\varepsilon+\varepsilon_1)\eta(r+r_1,c+r\cdot c_1)
		\end{equation*}
		where 
		\begin{multline}\label{eq:eta}
			\eta\colon \partial(c+r\cdot c_1)+(r+r_1)=\partial(c)+(r+(\partial(c_1)-r))+(r+r_1)\\ \stackrel{=}{\rightarrow} ((r'-r)+(r+((r_1'-r_1)-r)))+(r+r_1)\rightarrow ((r'-r)+r)+((r_1'-r_1)+r_1)
		\end{multline}
		is a special isomorphism. For two arrows $(r+r_1, c+r\cdot c_1)$ and $(r+r_1,\overline{c})$, we have that $c+r\cdot c_1 \sim c'$, and from  \eqref{eq:eta} $\partial(c+r\cdot c_1)=(r'-r)+((r+(r_1'-r_1))-r)$; and the right-side of this equality is special congruent to $(r'+r_1')-(r+r_1)$, which is equal to $\partial(\overline{c})$. Therefore we can apply \textbf{Identification II}, which gives a commutative diagram 
		\begin{equation*}
			\xymatrix@C=20mm@R=20mm{
				r+r_1 \ar[dd]_-{(r+r_1,\overline{c})} \ar[r]^{(r+r_1,c+r\cdot c_1)} & \partial(c+r\cdot c_1)+(r+r_1) \ar[d]^{\eta} \\
				& ((r'-r)+r)+((r_1'-r_1)+r_1) \ar[d]^{\varepsilon+\varepsilon_1} \\
				((r'+r_1')-(r+r_1))+(r+r_1) \ar[r]_-{\overline{\varepsilon}} & r'+r_1'
			} 
		\end{equation*}
		from which the desired equality follows.
	\end{proof}

	\begin{prop}
		If $\phi\colon c\sim c'$ is a weak special congruence, then for any $r \in \mathbb{G}_0 $ and \textbf{Identification II} $\beta'(r,c')\alpha\equiv\beta(r,c)\alpha$  we have the following equalities 
		\begin{equation*}
			\beta'(r,c')\alpha+\beta_1(r_1,c_1)\alpha_1=\beta(r,c)\alpha+\beta_1(r_1,c_1)\alpha_1
		\end{equation*}
		\begin{equation*}
			\beta_1(r_1,c_1)\alpha_1+\beta'(r,c')\alpha=\beta_1(r_1,c_1)\alpha_1+\beta(r,c)\alpha
		\end{equation*}
		for any arrow $\beta_1(r_1,c_1)\alpha_1$ in $\mathbb{G}_1$.
	\end{prop}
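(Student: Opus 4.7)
The plan is to reduce each of the two equalities to a single application of Corollary~\ref{cor:2}. First I would expand both sides of the first equality using the defining formula for the sum in $\mathbb{G}_1$:
\begin{equation*}
	\beta'(r,c')\alpha+\beta_1(r_1,c_1)\alpha_1=(\beta'+\beta_1)\theta'(r+r_1,c'+r\cdot c_1)(\alpha+\alpha_1),
\end{equation*}
\begin{equation*}
	\beta(r,c)\alpha+\beta_1(r_1,c_1)\alpha_1=(\beta+\beta_1)\theta(r+r_1,c+r\cdot c_1)(\alpha+\alpha_1),
\end{equation*}
with $\theta$ and $\theta'$ the prescribed special isomorphisms. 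Both arrows share source $r'+r_1'$ and target $r''+r_1''$; the outer right-hand components coincide literally ($\alpha+\alpha_1$), while $(\beta+\beta_1)\theta$ and $(\beta'+\beta_1)\theta'$ both have codomain $r''+r_1''$. Thus the two arrows are in precisely the shape to which Corollary~\ref{cor:2} can be applied, with middle elements $c+r\cdot c_1$ and $c'+r\cdot c_1$.

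The remaining step is to verify that $c+r\cdot c_1\sim c'+r\cdot c_1$ is a weak special congruence in $M$. The underlying congruence is immediate from $\phi\colon c\sim c'$. For the weak-special part I would use that, by definition of a weak special congruence, $\partial(\phi)\colon\partial(c)\sim\partial(c')$ is a special congruence in $N$; then
\begin{equation*}
	\partial(c+r\cdot c_1)=\partial(c)+\partial(r\cdot c_1)\sim\partial(c')+\partial(r\cdot c_1)=\partial(c'+r\cdot c_1)
\end{equation*}
is special, since special congruences are closed under sums. Corollary~\ref{cor:2} then yields the first equality.

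For the second equality the plan is symmetric: expand to
\begin{equation*}
	\beta_1(r_1,c_1)\alpha_1+\beta(r,c)\alpha=(\beta_1+\beta)\theta(r_1+r,c_1+r_1\cdot c)(\alpha_1+\alpha),
\end{equation*}
\begin{equation*}
	\beta_1(r_1,c_1)\alpha_1+\beta'(r,c')\alpha=(\beta_1+\beta')\theta'(r_1+r,c_1+r_1\cdot c')(\alpha_1+\alpha),
\end{equation*}
and invoke Corollary~\ref{cor:2} once more. The new technical point is to verify that $c_1+r_1\cdot c\sim c_1+r_1\cdot c'$ is a weak special congruence. Axiom (iv) of the action gives $r_1\cdot c\sim r_1\cdot c'$ from $c\sim c'$; for the $\partial$-image I would use the strict c-crossed module identity $\partial(b\cdot a)=b+(\partial(a)-b)$ to rewrite $\partial(r_1\cdot c)=r_1+(\partial(c)-r_1)$ and $\partial(r_1\cdot c')=r_1+(\partial(c')-r_1)$, so that the required specialness follows from $\partial(c)\sim\partial(c')$ by closure of special congruences under sums and inverses; adding $\partial(c_1)$ on the left preserves this.

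I expect no serious obstacle: the proof is essentially bookkeeping to confirm that weak special congruences propagate through sums and through the action, which is exactly what the axioms of a strict c-crossed module and of an action in $\cGr$ guarantee. The one point to handle carefully is matching the outer domain/codomain data before citing Corollary~\ref{cor:2}, since its hypotheses require fixed outer endpoints on either side.
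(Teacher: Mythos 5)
Your proposal is correct and follows essentially the same route as the paper: expand both sums by the defining formula, observe that the middle entries $c+r\cdot c_1$ and $c'+r\cdot c_1$ (resp.\ $c_1+r_1\cdot c$ and $c_1+r_1\cdot c'$) are weakly specially congruent, and conclude by the identification machinery. The only cosmetic differences are that you invoke Corollary~\ref{cor:2} where the paper applies \textbf{Identification II} directly (the outer data coincide, so either suffices), and that you spell out the verification that weak special congruences propagate through sums and the action, which the paper leaves implicit.
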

	\begin{proof}
		The right-side of the first equality is equal to $(\beta+\beta_1)\theta(r+r_1,c+r\cdot c_1)(\alpha+\alpha_1)$ and the left-side of the same equality is equal to $(\beta'+\beta'_1)\theta'(r+r_1,c'+r\cdot c_1)(\alpha+\alpha_1)$. We have the following picture (see also diagram \eqref{diag:31}).
  
		\begin{equation*}
			\xymatrix@C=15mm@R=15mm{
				& r'+r_1' \ar[d]^-{\alpha+\alpha_1} & \\
				& r+r_1 \ar[dl]_-{(r+r_1,c'+r\cdot c_1)~~} \ar[dr]^-{~~(r+r_1,c+r\cdot c_1)} & \\
				(\partial(c')+\partial(r\cdot c_1))+(r+r_1) \ar[d]_-{\theta'}&  & (\partial(c)+\partial(r\cdot c_1))+r+r_1 \ar[d]^-{\theta} \\
				(\partial(c')+r)+(\partial(c_1)+r_1) \ar[dr]_-{\beta'+\beta_1} & & (\partial(c)+r)+(\partial(c_1)+r_1) \ar[dl]^-{\beta+\beta_1} \\
				& r''+r_2 &
			}
		\end{equation*}
		Since we have a weak special congruence $c'+r\cdot c_1 \sim c+r\cdot c_1$, by \textbf{Identification II} we obtain the first equality of the proposition. The second equality is proved in a similar way.
	\end{proof}
	
	\begin{prop}\label{prop:assoc}
		For any three arrows $(r,c)$, $(r_1,c_1)$ and $(r_2,c_2)$ in $\mathbb{G}_1$ we have an isomorphism
		\begin{equation*}
			((r,c)+(r_1,c_1))+(r_2,c_2)\approx(r,c)+((r_1,c_1)+(r_2,c_2)).
		\end{equation*}
	\end{prop}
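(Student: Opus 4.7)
The plan is to expand both sides using the definition of addition in $\mathbb{G}_1$, and then to reduce the comparison to a weak special congruence in $M$ and a special isomorphism in $N$, so that Proposition \ref{prop:46} and Corollary \ref{cor:2} finish the job. At the level of $\mathbb{G}_0=N$, the associator $\alpha_{r,r_1,r_2}\colon (r+r_1)+r_2\sim r+(r_1+r_2)$ is a special congruence, hence a special isomorphism in $\mathbb{G}_1$; this will govern how the sources and targets of the two parenthesizations are compared.

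First I would compute the left-hand side. Applying the definition of the sum twice, $(r,c)+(r_1,c_1)=\theta_1(r+r_1,\,c+r\cdot c_1)$, which is an arrow $r+r_1\to(\partial(c)+r)+(\partial(c_1)+r_1)$; since its $\beta$-part is $\theta_1$ and its $\alpha$-part is $1$, a second application of the sum formula yields
\begin{equation*}
((r,c)+(r_1,c_1))+(r_2,c_2)=(\theta_1+1)\,\Theta\bigl((r+r_1)+r_2,\,(c+r\cdot c_1)+(r+r_1)\cdot c_2\bigr),
\end{equation*}
for an explicit composite $\Theta$ of special isomorphisms in $N$. Analogously, the right-hand side is
\begin{equation*}
(r,c)+((r_1,c_1)+(r_2,c_2))=(1+\theta_2)\,\Theta'\bigl(r+(r_1+r_2),\,c+r\cdot(c_1+r_1\cdot c_2)\bigr),
\end{equation*}
where $\theta_2$ comes from the inner sum $(r_1,c_1)+(r_2,c_2)$.

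Next, I would compare the two ``middle data''. The sources differ by the associator in $N$, a special isomorphism, and similarly for the targets. For the $M$-components, the action axioms give weak special congruences $r\cdot(c_1+r_1\cdot c_2)\sim r\cdot c_1+r\cdot(r_1\cdot c_2)\sim r\cdot c_1+(r+r_1)\cdot c_2$, and associativity of $+$ in $M$ gives $c+(r\cdot c_1+(r+r_1)\cdot c_2)\sim (c+r\cdot c_1)+(r+r_1)\cdot c_2$. Since $\partial$ sends special congruences to special congruences, the $\partial$-images in $N$ are also special congruent, so the two middle elements of $M$ are weakly special congruent. Proposition \ref{prop:46} then produces a commutative square connecting the two ``core'' arrows $(r+r_1+r_2,\,\cdot)$, and Corollary \ref{cor:2} upgrades this to equality of the full arrows whenever the outer $\alpha,\beta$ match up. The outer pieces $(\theta_1+1)\Theta$ and $(1+\theta_2)\Theta'$ are composites of special isomorphisms, so by coherence of special isomorphisms in the c-group $N$, they can be reconciled by a single special isomorphism, yielding the required weak special isomorphism $((r,c)+(r_1,c_1))+(r_2,c_2)\approx(r,c)+((r_1,c_1)+(r_2,c_2))$.

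The main obstacle is purely bookkeeping: one must track carefully which specific special isomorphisms of $N$ appear in $\Theta$ and $\Theta'$ and verify that after pre- and post-composing with the associator in $N$ and the associator-in-$M$-image $\partial(\eta)$, everything fits into a single coherent diagram. Coherence for c-groups (invoked via Identification II exactly as in Proposition \ref{prop:4.6}) is what allows the verification to avoid a brute combinatorial check, reducing the proof to producing the diagram and citing Corollary \ref{cor:2}.
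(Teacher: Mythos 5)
Your proposal is correct and follows essentially the same route as the paper: expand both sides via the definition of the sum, observe that the two middle $M$-components $(c+r\cdot c_1)+(r+r_1)\cdot c_2$ and $c+r\cdot(c_1+r_1\cdot c_2)$ are weakly special congruent while the sources and targets differ by associator special isomorphisms, and conclude by the commutative square of Proposition \ref{prop:46} together with coherence. The only difference is that you spell out the action-axiom computation behind the weak special congruence, which the paper simply asserts.
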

	\begin{proof}
		First we compute the left-side of the isomorphism. Let 
		\begin{equation*}
		\theta_1\colon (\partial(c)+(r+(\partial(c_1)-r)))+(r+r_1)\rightarrow (\partial(c)+r)+(\partial(c_1)+r_1)	
		\end{equation*}
		be the special isomorphism. We have
		\begin{equation*}
			\theta_1(r+r_1,c+r\cdot c_1)+(r_2,c_2)=\theta_2((r+r_1)+r_2,(c+r\cdot c_1)+(c+r\cdot c_1)\cdot c_2),
		\end{equation*}
		where 
		\begin{equation*}
			\theta_2\colon ((\partial(c)+(r+(\partial(c_1)-r)))+((r+r_1)+(\partial(c_2)-(r+r_1))))+((r+r_1)+r_2) \rightarrow ((\partial(c)+r)+(\partial(c_1)+r_1))+(\partial(c_2)+r_2)
		\end{equation*}
		is the obvious special isomorphism.
		
		The right-side of the isomorphism in the proposition is equal to
		\begin{equation*}
			(r,c)+\theta_1'(r_1+r_2, c_1+r_1\cdot c_2)=\theta_2'(r+(r_1+r_2), c+r\cdot(c_1+r_1\cdot c_2))
		\end{equation*}
		where 
		\begin{equation*}
			\theta_1'\colon (\partial(c_1)+(r_1+(\partial(c_2)-r_1)))+(r_1+r_2)\rightarrow(\partial(c_1)+r_1)+(\partial(c_2)+r_2)
		\end{equation*}
		and
		\begin{equation*}
			\theta_2'\colon (\partial(c)+(r+(\partial(c_1)+((r_1+\partial(c_2))-r_1))-r))+((r+r_1)+r_2)\rightarrow(\partial(c)+r)+((\partial(c_1)+r_1)+(\partial(c_2)+r_2)),
		\end{equation*}
		are the obvious special isomorphisms.
		
		We have the weak special congruence $(c+r\cdot c_1)+(r+r_1)\cdot c_2 \sim c+r\cdot (c_1+r_1\cdot c_2)$ and a special congruence $(r+r_1)+r_2\sim r+ (r_1+r_2)$, which give a commutative diagram like in the proof of Proposition \ref{prop:46}
		
		\begin{equation*}
			\xymatrix@C=65mm@R=15mm{
				(r+r_1)+r_2 \ar[d]_-{\varepsilon} \ar[r]^-{\theta_2((r+r_1)+r_2,(c+r\cdot c_1)+(r+r_1)\cdot c_2)} &  ((\partial(c)+r)+(\partial(c_1)+r_1))+(\partial(c_2)+r_2) \ar[d]^-{\varepsilon_1} \\
				r+(r_1+r_2) \ar[r]_-{\theta_2'(r+(r_1+r_2),c+r\cdot(c_1+r_1\cdot c_2))}  & (\partial(c)+r)+((\partial(c_1)+r_1)+(\partial(c_2)+r_2))
			}
		\end{equation*}
		where $\varepsilon$ and $\varepsilon_1$ are special isomorphisms. These prove the desired isomorphism of arrows.
	\end{proof}
	
	In more general case of arrows, the proposition can be proved in a similar way. Therefore we have proved that the sum of arrows in $\mathbb{G}_1$ is associative up to isomorphism. 
	
	The sum of objects in $\mathbb{G}_0$ is defined as a sum in c-group $N$, which is associative up to special congruence, and therefore it is associative in $\mathbb{G}_0$ up to special isomorphism in $\mathbb{G}_1$.
	
	A zero object in the category $\mathbb{G}$ with objects 
 $\mathbb{G}_0=N$ is a zero element 0 in $N$ as in c-group, i.e. we have special isomorphisms $0+r\approx r\approx r+0$ which are defined by special congruences in $N$.
	
	A zero element for the addition operation in $\mathbb{G}_1$ we define as $\sigma(0,0)$, where $\sigma$ is the special isomorphism defined by the special congruence $\sigma\colon \partial(0)+0\sim 0$. According to the definition of identity arrows for any object $r\in\mathbb{G}_0$, we have $\sigma(0,0)=1_0\colon 0\rightarrow 0$.
	
	\begin{prop}\label{prop:zeromorph}
		For any arrow $\beta(r,c)\alpha\in\mathbb{G}_1$ we have an isomorphism
		\begin{equation*}
			\beta(r,c)\alpha+\sigma(0,0)\approx  \beta(r,c)\alpha \approx \sigma(0,0)+\beta(r,c)\alpha
		\end{equation*}
	\end{prop}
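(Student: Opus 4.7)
The plan is to exhibit the required weak special isomorphism explicitly, using the right-unit isomorphism $\rho_r\colon r+0\stackrel{\approx}{\to} r$ in $N$ (respectively the left-unit $\lambda_r$ for the second half) as the components at domain and codomain, and then to verify the resulting naturality square by reducing it to an instance of Corollary~\ref{cor:2}.

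First I would unfold the sum by the definition of addition in $\mathbb{G}_1$:
$$\beta(r,c)\alpha + \sigma(0,0) = (\beta+\sigma)\,\theta\,\bigl(r+0,\,c+r\cdot 0\bigr)\,(\alpha+1_0),$$
where $\theta$ is the special isomorphism prescribed by the sum. This arrow runs from $r'+0$ to $r''+0$, so the candidate weak special isomorphism to $\beta(r,c)\alpha\colon r'\to r''$ is the pair $(\rho_{r'},\rho_{r''})$. Second, one checks that $c+r\cdot 0\sim c$ is a special congruence in $M$: from the special congruence $0\sim 0+0$ and the action axiom $b\cdot(a+a_1)\sim b\cdot a+b\cdot a_1$ one obtains $r\cdot 0\sim r\cdot 0+r\cdot 0$, hence $r\cdot 0\sim 0$ is special; composing with $c+0\sim c$ gives $c+r\cdot 0\sim c$ as a special, hence weak special, congruence.

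Third, to verify that the square
$$\xymatrix@C=20mm@R=10mm{
r'+0 \ar[r]^-{\beta(r,c)\alpha+\sigma(0,0)} \ar[d]_-{\rho_{r'}} & r''+0 \ar[d]^-{\rho_{r''}} \\
r' \ar[r]_-{\beta(r,c)\alpha} & r''
}$$
commutes, I would use Definition~\ref{def1}\ref{def1:b} to absorb $\rho_{r''}$ into the outer special isomorphism $(\beta+\sigma)\theta$ and $\rho_{r'}$ into $\alpha$. This presents both composites as arrows from $r'+0$ to $r''$ of the standard form $\beta_\ast(\,\cdot\,)\alpha_\ast$: one with middle triple $(r+0,\,c+r\cdot 0)$ and the other with $(r,c)$. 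Their flanking special isomorphisms have matching domains and codomains, and the middle components are weakly specially congruent by the previous step, so Corollary~\ref{cor:2} yields the equality of the two composites. The argument for $\sigma(0,0)+\beta(r,c)\alpha\approx \beta(r,c)\alpha$ is entirely symmetric, using the left-unit $\lambda$ and the analogous special congruence $0\cdot c\sim c$ supplied by the action axioms.

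The main obstacle is not conceptual but combinatorial: one must carefully match the various special isomorphisms produced by the sum definition (namely $\theta$, $\beta+\sigma$, $\alpha+1_0$) with the $\rho$-isomorphisms at the ends and confirm, via coherence of special isomorphisms in the c-group $N$, that the flanking data really do satisfy the hypotheses of Corollary~\ref{cor:2}, so that the whole square collapses to a single application of that corollary.
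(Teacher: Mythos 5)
Your proposal is correct and follows essentially the same route as the paper: unfold the sum to $(\beta+\sigma)\theta(r+0,\,c+r\cdot 0)(\alpha+1_0)$, supply the unit special isomorphisms $r'\approx r'+0$, $r''\approx r''+0$ together with the congruence $c+r\cdot 0\sim c$, and conclude by absorbing the flanking special isomorphisms and invoking coherence (the paper cites Proposition~\ref{prop:44} where you cite Definition~\ref{def1}~\ref{def1:b} and Corollary~\ref{cor:2}, but these play the same role). The only caveat is that $r\cdot 0\sim 0$ need not be a \emph{special} congruence in $M$ under the paper's definition of special congruences, but it is weakly special because $\partial(r\cdot 0)=r+(\partial(0)-r)\sim 0$ is special in $N$, and a weak special congruence is all your application of Corollary~\ref{cor:2} requires.
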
 
	\begin{proof}
		Let $\alpha\colon r'\rightarrow r$ and $\beta\colon \partial(c)+r\rightarrow r''$ be special isomorphisms. For the left-side of the isomorphism we have 
		\begin{equation*}
			\beta(r,c)\alpha+\sigma(0,0)=(\beta+\sigma)\zeta(r+0,c+r\cdot 0)(\alpha+1_0),
		\end{equation*}
		where $\beta+\sigma\colon (\partial(c)+r)+(\partial(0)+0)\rightarrow r''+0$ and $\alpha+1_0\colon r'+0\rightarrow r+0$ are obvious special isomorphisms. The picture is
		\begin{equation*}
			\xymatrix@C=15mm{
				r' \ar[r]^-{\alpha} & r \ar[r]^-{(r,c)} & \partial(c)+r \ar[r]^-{\beta} & r''
			}
		\end{equation*}
	
		\begin{equation*}
			\xymatrix@C=13mm{
				r'+0 \ar[r]^-{\alpha+1_0} & r+0 \ar[r]^-{(r+0,c+r\cdot 0)} & (\partial(c)+(r+(\partial(0)-r)))+(z+0) \ar[r]^-{\zeta} & (\partial(c)+r)+(\partial(0)+0)  \ar[r]^-{\beta+\sigma} & r''+0,
			}
		\end{equation*}
		where $\zeta$ is a special isomorphism. Moreover we have special isomorphisms $r\approx r+0$, $r'\approx r'+0$, $r''\approx r''+0$ and $c\approx c+r\cdot 0$. Hence by coherence of special isomorphisms and Proposition \ref{prop:44}, the isomorphism $\beta(r,c)\alpha+\sigma(0,0)\approx \beta(r,c)\alpha$ follows. The second isomorphism is proved in a similar way.
	\end{proof}
	
	For any arrow $\beta(r,c)\alpha\in\mathbb{G}_1$ we define an opposite arrows $-(\beta(r,c)\alpha)=(-\beta)\varepsilon(-r,(-r)\cdot (-c))(-\alpha)$, where $\alpha\colon r'\rightarrow r$ and $\beta\colon \partial(c)+r\rightarrow r''$ are special isomorphisms. $-\alpha\colon -r'\rightarrow -r$ and $-\beta\colon -(\partial(c)+r)\rightarrow -r''$ are the opposite special isomorphisms  corresponding to the opposite special congruences $-\alpha\colon -r'\sim  -r$ and $-\beta\colon -(\partial(c)+r)\sim -r''$, respectively; and $\varepsilon\colon ((-r)+(\partial(-c)-(-r)))+(-r)\rightarrow -(\partial(c)+r)$ is an obvious special isomorphism.
	
	\begin{prop}\label{prop:oppoarrow}
		For any $\beta(r,c)\alpha\in\mathbb{G}_1$ we have isomorphisms
		\begin{equation*}
			\beta(r,c)\alpha+(-(\beta(r,c)\alpha))\approx \sigma(0,0)\approx -(\beta(r,c)\alpha)+\beta(r,c)\alpha
		\end{equation*}
		where $\sigma\colon \partial(0)+0\rightarrow 0$ is the special isomorphism.
	\end{prop}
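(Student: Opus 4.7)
The plan is to mirror the strategy used for Proposition \ref{prop:zeromorph}: expand the sum using Definition of addition in $\mathbb{G}_1$, then collapse the result to $\sigma(0,0)$ using \textbf{Identification II} and the coherence of special isomorphisms. First I would unfold the opposite arrow, writing $-(\beta(r,c)\alpha)=((-\beta)\varepsilon)(-r,(-r)\cdot(-c))(-\alpha)$ as an arrow in the standard form $\beta_2(r_2,c_2)\alpha_2$, with $\beta_2=(-\beta)\varepsilon$, $r_2=-r$, $c_2=(-r)\cdot(-c)$, $\alpha_2=-\alpha$. Applying the addition formula gives
\begin{equation*}
\beta(r,c)\alpha+(-(\beta(r,c)\alpha)) \;=\; (\beta+(-\beta)\varepsilon)\,\theta\bigl(r+(-r),\,c+r\cdot((-r)\cdot(-c))\bigr)\,(\alpha+(-\alpha)),
\end{equation*}
with $\theta$ the prescribed special isomorphism.

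Next I would simplify each piece separately. The source and target are handled by the special congruences $r'+(-r')\sim 0$ and $r''+(-r'')\sim 0$ in $N$, which yield special isomorphisms $\alpha+(-\alpha)\approx 1_0$ and, via functoriality of $+$ together with $\beta+(-\beta)\approx 1_0$, a special isomorphism from $(\partial(c)+r)+(\partial(c_2)+r_2)$ to $0$ absorbing the postcomposition with $\varepsilon$. For the middle arrow, I would exploit the axioms of an action in $\cGr$: the congruences $r\cdot((-r)\cdot(-c))\sim(r+(-r))\cdot(-c)\sim 0\cdot(-c)\sim -c$ yield $c+r\cdot((-r)\cdot(-c))\sim c+(-c)\sim 0$, and these are assembled from special congruences in the c-crossed module, so that $\phi\colon c+r\cdot((-r)\cdot(-c))\sim 0$ is a weak special congruence, whose image under $\partial$ is a special congruence to $\partial(0)$.

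Then I would apply \textbf{Identification II} along $\phi$ to the middle arrow, obtaining an identification
\begin{equation*}
\theta\bigl(r+(-r),\,c+r\cdot((-r)\cdot(-c))\bigr)\;\equiv\;\theta'\bigl(r+(-r),\,0\bigr)
\end{equation*}
for a suitable special isomorphism $\theta'$. Combined with the special isomorphism $r+(-r)\approx 0$ and the identification of $(r+(-r),0)$ with $(0,0)$ (via the diagram used in Proposition \ref{prop:46} and Corollary \ref{cor:2}), the whole expression becomes a composition of special isomorphisms around $\sigma(0,0)$. By the coherence of special isomorphisms in $\mathbb{G}_1$, any two such composites between the same endpoints are equal, giving the required isomorphism $\beta(r,c)\alpha+(-(\beta(r,c)\alpha))\approx \sigma(0,0)$. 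The second isomorphism $-(\beta(r,c)\alpha)+\beta(r,c)\alpha\approx \sigma(0,0)$ is handled by an entirely symmetric computation.

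The main obstacle will be step two: verifying that each simplifying congruence (particularly $r\cdot((-r)\cdot(-c))\sim -c$, which relies on the action axioms rather than the crossed module identities) is genuinely a \emph{weak special} congruence so that \textbf{Identification II} applies, and then bookkeeping the tower of special isomorphisms carefully enough that coherence collapses them to the identity of $0$ in $\mathbb{G}_1$. Once this is set up, the argument is a straightforward extension of Propositions \ref{prop:zeromorph} and \ref{prop:46}.
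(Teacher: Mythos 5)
Your proposal is correct and follows essentially the same route as the paper: unfold the opposite arrow and the addition formula to get $(\beta+((-\beta)\varepsilon))\,\theta\,(r+(-r),c+r\cdot((-r)\cdot(-c)))\,(\alpha+(-\alpha))$, then collapse this to $\sigma(0,0)$ using the weak special congruence $c+r\cdot((-r)\cdot(-c))\sim 0$, \textbf{Identification II}, and coherence. The paper's own proof is terser, simply citing Proposition \ref{prop:44} to absorb the surrounding special isomorphisms; your version supplies the detail the paper leaves implicit, in particular the verification via the action axioms and the crossed-module identity $\partial(b\cdot a)=b+(\partial(a)-b)$ that the middle congruence is genuinely weak special.
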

	\begin{proof}
		We have 
		\begin{equation}\label{eq:beta}
			\begin{alignedat}{2}
				\beta(r,c)\alpha + (-(\beta(r,c)\alpha)) & = \beta(r,c)\alpha + (-\beta)\varepsilon(-r,(-r)\cdot(-c))(-\alpha) \\
				& = (\beta+ ((-\beta)\varepsilon))\theta(r+(-r),c+r\cdot((-r)\cdot(-c)))(\alpha+(-\alpha)),
			\end{alignedat}
		\end{equation}
		where $\beta+ ((-\beta)\varepsilon)\colon (\partial(c)+r)+(((-r)+(\partial(-c)-(-r)))+(-r))\rightarrow r''-r''$ is a special isomorphism. By Proposition \ref{prop:44} we obtain that the right-side of \eqref{eq:beta} is isomorphic to $(0,0)$ and therefore to $\sigma(0,0)$. The second isomorphism of the proposition is proved similiarly.
	\end{proof}
	
	Below we prove that an opposite arrow does not depend on the choice of representatives.
	
	\begin{prop}
		For any special isomorphism $\alpha\in\mathbb{G}_1$, where $\alpha\equiv\varepsilon(r,c)$ by \textbf{Identification I}, we have an equality $-\alpha=(\varepsilon(r,c))$, where $\alpha\colon r\rightarrow r'$ and $\varepsilon\colon (r'-r)+r\rightarrow r'$ are special isomorphisms and $\partial(c)=r'-r$. 
	\end{prop}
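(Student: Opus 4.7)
The plan is to show that $-\alpha$, viewed through \textbf{Identification I}, has exactly the form produced by applying the definition of the opposite arrow to $\varepsilon(r,c)$; the matching of these two representations will be delivered by Corollary \ref{cor:2}, once one checks a weak special congruence between the $M$-components.

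First I would apply \textbf{Identification I} to the special isomorphism $-\alpha\colon -r\to -r'$ (obtained from the opposite special congruence of $\alpha$ in $N$), which yields $-\alpha\equiv\bar{\varepsilon}(-r,\bar{c})$, where $\bar{c}\in M$ is the unique element weakly special congruent to $0$ with $\partial(\bar{c})=-r'-(-r)$, and $\bar{\varepsilon}\colon(-r'-(-r))+(-r)\to -r'$ is the corresponding special isomorphism. Next, regarding $\varepsilon(r,c)$ as the arrow $\beta(r,c)\alpha'$ with $\alpha'=1_r$ and $\beta=\varepsilon$, the definition of opposite arrow from the previous page gives
\[
-(\varepsilon(r,c))=(-\varepsilon)\,\eta(-r,(-r)\cdot(-c)),
\]
where $\eta\colon ((-r)+(\partial(-c)-(-r)))+(-r)\to -(\partial(c)+r)$ and $-\varepsilon\colon -(\partial(c)+r)\to -r'$ are special isomorphisms. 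Thus both sides are arrows $-r\to -r'$.

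The substantive step is to establish that $(-r)\cdot(-c)\sim\bar{c}$ is a \emph{weak} special congruence in $M$. Since $M$ is connected (the c-crossed module is cssc), the ordinary congruence $(-r)\cdot(-c)\sim\bar{c}$ holds automatically. To upgrade it to a weak special congruence, one must verify that $\partial((-r)\cdot(-c))\sim\partial(\bar{c})$ is a \emph{special} congruence in $N$. Using strictness of the c-crossed module one has $\partial((-r)\cdot(-c))=(-r)+\partial(-c)-(-r)=(-r)+\partial(-c)+r$, and then from $\partial(-c)\sim-\partial(c)=-(r'-r)$ (a special congruence), together with associativity and the defining special congruences of $N$, one reduces this to $-r'-(-r)=\partial(\bar{c})$ through a chain of special congruences.

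With that weak special congruence in hand, Corollary \ref{cor:2} applies to the two arrows $\bar{\varepsilon}(-r,\bar{c})$ and $(-\varepsilon)\eta(-r,(-r)\cdot(-c))$, which have the same domain $-r$ and codomain $-r'$, and yields their equality; this is exactly $-\alpha=-(\varepsilon(r,c))$. The main obstacle I expect is bookkeeping: one must be scrupulous in tracking which congruences in $N$ are special (so that the congruence in $M$ it is responsible for remains \emph{weak special}), and in checking that the composite special isomorphism produced from $\eta$ and $-\varepsilon$ is the same codomain specialisation as $\bar{\varepsilon}$ up to the coherence built into Corollary \ref{cor:2}. Once these identifications are laid out, the verification is routine.
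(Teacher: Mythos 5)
Your proposal is correct and follows essentially the same route as the paper: compute $-(\varepsilon(r,c))$ from the definition of opposite arrow, apply \textbf{Identification I} to the special congruence $-\alpha\colon -r\sim -r'$, and identify the two resulting arrows via a weak special congruence between their $M$-components using \textbf{Identification II} and coherence (your appeal to Corollary \ref{cor:2} is just the packaged form of that step). Your explicit verification that $(-r)\cdot(-c)$ and the element representing $-\alpha$ are weakly special congruent—by computing $\partial$ of both via strictness—is slightly more detailed than the paper's remark that both are weakly special congruent to $0$, but it is the same argument.
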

	\begin{proof}
		By the definition of opposite arrow we have $-(\varepsilon(r,c))=(-\varepsilon)\varphi(-r,(-r)\cdot(-c))$,
		where 
		\begin{equation*}
			\varphi\colon (-r+(\partial(-c)-(-r)))+(-r)\colon -r+(r-r')\rightarrow -r',
		\end{equation*}
		is a special isomorphism. Here we applied that $\partial(-c)\sim -\partial(c)$ in $N$, and therefore $\partial(-c)\approx-\partial(c)$ in $\mathbb{G}_1$. $-\alpha\colon -r \sim -r'$ is a special congruence, therefore we have $-\alpha\equiv\varepsilon'(-r,c')$, where $\partial(c')=-r'-(-r)$ and $\varepsilon'\colon (-r'-(-r))+(-r)\rightarrow -r'$ is a special isomorphism. We have to show that
		\begin{equation}\label{eq:varphi}
			\varepsilon'(-r,c')=(-\varepsilon)\varphi(-r,-r\cdot (-c)).
		\end{equation}
		$c'$ and $-r\cdot (-c)$ are weakly special congruent to 0; from the coherence of special isomorphisms and \textbf{Identification II} we prove equality \eqref{eq:varphi}.
	\end{proof}
	
	\begin{prop}
		If $\varphi\colon c\sim c'$ is a weak special congruence, then 
		\begin{equation*}
			-(\beta'(r,c')\alpha)=-(\beta(r,c)\alpha),
		\end{equation*}
		for any $r\in\mathbb{G}_0$ (see diagram \eqref{diag:31}).
	\end{prop}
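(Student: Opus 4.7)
The plan is to expand both opposite arrows via their definition and then apply Corollary~\ref{cor:2}. Writing
\begin{equation*}
-(\beta(r,c)\alpha)=(-\beta)\varepsilon(-r,(-r)\cdot(-c))(-\alpha), \q -(\beta'(r,c')\alpha)=(-\beta')\varepsilon'(-r,(-r)\cdot(-c'))(-\alpha),
\end{equation*}
where $\varepsilon,\varepsilon'$ are the obvious special isomorphisms introduced in the definition of the opposite arrow, we observe that both expressions share the domain $-r'$ (with $\alpha\colon r'\to r$) and the codomain $-r''$ (both $\beta$ and $\beta'$ landing in $r''$ by the hypothesis of diagram~\eqref{diag:31}). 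Each of them is therefore an arrow of the form $\tilde{\beta}(-r,\tilde{c})\tilde{\alpha}$ with $\tilde{\alpha}=-\alpha$ and matching outer endpoints, so by Corollary~\ref{cor:2} it suffices to produce a weak special congruence $(-r)\cdot(-c)\sim(-r)\cdot(-c')$ in $M$.

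First I would upgrade $\varphi$ to a weak special congruence between $-c$ and $-c'$. Since $\varphi$ is weak special, $\partial(\varphi)\colon\partial(c)\sim\partial(c')$ is a special congruence in $N$; taking inverses in the c-group $N$ preserves special congruences, so $-\partial(c)\sim-\partial(c')$ is special. Combined with the special congruences $\partial(-c)\sim-\partial(c)$ and $\partial(-c')\sim-\partial(c')$ (consequences of $\partial$ being a c-group morphism, which takes inverses to inverses up to special congruence), this yields $\partial(-c)\sim\partial(-c')$ special. Since $M$ is connected we have $-c\sim-c'$, and by the preceding step this congruence is weak special. Property~(iv) of an action in $\cGr$ then gives $(-r)\cdot(-c)\sim(-r)\cdot(-c')$.

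Next I would verify that this last congruence is weak special. By axiom~(i) of a c-crossed module,
\begin{equation*}
\partial\bigl((-r)\cdot(-c)\bigr)=(-r)+\partial(-c)-(-r),
\end{equation*}
and analogously for $c'$. Adding the identity congruence $1_{-r}$ on the left and $1_{-(-r)}$ on the right to the special congruence $\partial(-c)\sim\partial(-c')$ produces a special congruence in $N$ between $\partial((-r)\cdot(-c))$ and $\partial((-r)\cdot(-c'))$. Hence $(-r)\cdot(-c)\sim(-r)\cdot(-c')$ is weak special, and Corollary~\ref{cor:2} delivers the desired equality.

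The only delicate point is the bookkeeping of the weak-special property: one must check that it survives both the passage $\varphi\mapsto -\varphi$ and the transport through the action $(-r)\cdot(-)$, for which the crossed module axiom~(i) is essential. Once this is secured, no diagram chase is required, and the statement follows as a single application of Corollary~\ref{cor:2}.
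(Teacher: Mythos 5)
Your proof is correct and follows essentially the same route as the paper: the paper's (very terse) argument likewise expands the definition of the opposite arrow and invokes Identification II — via Corollary~\ref{cor:2}, exactly as in the preceding proposition on inverses — to identify the two middle arrows. Your explicit verification that $(-r)\cdot(-c)\sim(-r)\cdot(-c')$ is a weak special congruence, using crossed-module axiom (i), is precisely the bookkeeping the paper leaves implicit.
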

	\begin{proof}
		By \textbf{Identification II} we have $\beta'(r,c')\alpha=\beta(r,c)\alpha$. The proof is similar to the previous statement, applying the definition of opposite arrows and \textbf{Identification II}.
	\end{proof}
	
	\begin{thm}
		The category $\mathbb{G}=(\mathbb{G}_0,\mathbb{G}_1,d_0,d_1,i,m)$ with the defined addition operation in $\mathbb{G}$ is a categorical group.
	\end{thm}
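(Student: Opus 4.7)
The plan is to verify the defining data of a categorical group---a monoidal groupoid with object-wise inverses---by assembling the constructions of Sections 3--6. Most of the pieces are already in place: $\mathbb{G}$ is a category with composition (Section 4), identity arrows $i(r)=\varepsilon(r,0)$, associativity (Proposition \ref{prop:4.6}), and two-sided inverses for every arrow (end of Section 5), so it is a groupoid. The monoidal product on $\mathbb{G}_0=N$ is the c-group addition of $N$, and on arrows it was defined and proved independent of the choice of representatives in Section 6. Propositions \ref{prop:assoc} and \ref{prop:zeromorph} supply the associativity and left/right unit isomorphisms for the monoidal sum of arrows, and Proposition \ref{prop:oppoarrow} supplies the opposite arrow for each arrow.

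What remains to verify is: (i) bifunctoriality of $+$; (ii) naturality of the structural isomorphisms $\alpha_{r,s,t}\colon (r+s)+t\approx r+(s+t)$, $\lambda_r\colon 0+r\approx r$, $\rho_r\colon r+0\approx r$, $\varepsilon_r\colon -r+r\approx 0$, $\delta_r\colon r+(-r)\approx 0$, all of which I would take to be the special isomorphisms corresponding to the special congruences in the c-group $N$; and (iii) commutativity of the pentagon, the triangle, and the two inverse-coherence diagrams.

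For (i), the identity compatibility $1_r+1_s\equiv 1_{r+s}$ follows from unwinding $\varepsilon(r,0)+\varepsilon(s,0)$ by the definition of addition on arrows and applying \textbf{Identification II} together with the weak special congruence $0+r\cdot 0\sim 0$ in $M$. The composition compatibility $(f'\circ f)+(g'\circ g)=(f'+g')\circ(f+g)$ is the main computational step: expanding both sides via Definition \ref{def1}\ref{def1:c} and the formula for $+$ on arrows yields two arrows of the form $\widetilde{\beta}(r_1+s_1,c)\widetilde{\alpha}$ and $\widetilde{\beta}'(r_1+s_1,c')\widetilde{\alpha}$ with the same domain and codomain, whose $M$-components $c$ and $c'$ are weakly special congruent because $\partial(c)$ and $\partial(c')$ are joined by a composition of special congruences in $N$. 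Corollary \ref{cor:2} then gives the required equality.

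For (ii) and (iii), naturality of each of $\alpha,\lambda,\rho,\varepsilon,\delta$ with respect to arrows $\beta(r,c)\alpha$ reduces, via \textbf{Identification II} together with the facts that $M$ is connected and that $\partial$ carries special congruences to special congruences, to commutativity of a diagram of special isomorphisms in $N$, which holds by coherence in c-groups; the pentagon, triangle, and inverse-triangle identities reduce in the same way. The main obstacle I foresee is the composition compatibility in (i), because the formula for the sum of two arrows threads the action $r\cdot c$ through several special isomorphisms and the desired equality does not hold on the nose; Corollary \ref{cor:2} is precisely the mechanism that bridges that gap by identifying two arrows with the same endpoints whose $M$-components are only weakly special congruent. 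Once this step is completed, coherence of the resulting categorical group follows because every diagram in question is built entirely from special isomorphisms, and by the identifications any two parallel such composites agree.
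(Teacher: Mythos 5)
Your proposal is correct and follows essentially the same route as the paper: establish bifunctoriality of $+$ (identity compatibility via the weak special congruence $0+r\cdot 0\sim 0$ and \textbf{Identification II}, composition compatibility by expanding both sides and matching the $M$-components up to weak special congruence), then obtain naturality of the structural isomorphisms from Propositions \ref{prop:assoc}, \ref{prop:zeromorph} and \ref{prop:oppoarrow}, and reduce all coherence diagrams to coherence of special congruences in the underlying c-groups. The only cosmetic difference is that you invoke Corollary \ref{cor:2} where the paper applies \textbf{Identification II} and coherence directly, but these rest on the same mechanism.
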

	\begin{proof}
		First of all we prove that the addition operation in $\mathbb{G}$ is a functor $\mathbb{G}\times \mathbb{G}\rightarrow \mathbb{G}$. Consider the following picture of objects and arrows in $\mathbb{G}\times \mathbb{G}$.
		\begin{equation*}
			\xymatrix@R=13mm{
	r \ar[d]^-{(r,c)}	& r' \ar[d]^-{(r',c')}	& &	(r,r') \ar[d]^-{((r,c),(r',c'))}\\
	\partial(c)+r \ar[d]^-{(\partial(c)+r,c_1)}	& \partial(c')+r' \ar[d]^-{(\partial(c')+r',c_1')}	& \Rightarrow &	(\partial(c)+r,\partial(c')+r') \ar[d]^-{((\partial(c)+r,c_1),(\partial(c')+r',c_1'))}\\
	\partial(c_1)+(\partial(c)+r)	& \partial(c_1')+(\partial(c')+r')	& &	(\partial(c_1)+(\partial(c)+r),\partial(c_1')+(\partial(c')+r'))
			}
		\end{equation*}
		Here we have in mind that on the base of definition of sum for arrows we have
		\begin{equation*}
			d_j((r,c)+(r',c'))=d_j(r,c)+d_j(r',c'), \text{ for } j=0,1.
		\end{equation*}
		Let us first take the (vertical) composition of arrows and then the sum of resulting compositions. Then we have
		\begin{equation*}
			(\varepsilon(r,c_1+c),\varepsilon'(r',c_1'+c'))\longrightarrow \xi(r+r',(c_1+c)+r\cdot(c_1'+c'))
		\end{equation*}
		where $\varepsilon\colon (\partial(c_1)+\partial(c))+r_1\approx \partial(c_1)+(\partial(c)+r)$ and $\varepsilon'\colon (\partial(c_1')+\partial(c'))+r_1'\approx \partial(c_1)+(\partial(c')+r')$ are the special isomorphisms in $\mathbb{G}$. We have the following equality  and the special isomorphism
		\begin{alignat*}{2}
			\partial((c_1+c)+r\cdot (c_1'+c'))+(r+r')&=(\partial(c_1)+\partial(c))+((r+(\partial(c_1')-r))+(r+(\partial(c')-r)))+(r+r') \\
			&\stackrel{\xi}{\approx} (\partial(c_1)+(\partial(c)+r))+(\partial(c')+r').
		\end{alignat*}
	Now let us take first sums of arrows and then the composition of the sums. We have
		\begin{equation*}
			(r,c)+(r',c')=\eta(r+r',c+r\cdot c'),
		\end{equation*}
		where $\eta$ is the special isomorphism  $\partial(c)+((r+\partial(c'))-r)+(r+r')\approx(\partial(c)+r)+(\partial(c')+r')$.
		
		For the second sum we have
		\begin{equation*}
			(\partial(c)+r,c_1)+(\partial(c')+r',c_1')=\xi'((\partial(c)+r)+(\partial(c')+r'),c_1+((\partial(c)+r)\cdot c_1'))
		\end{equation*}
		where $\xi'$ is a special isomorphism
		\begin{equation*}
			(\partial(c_1)+((\partial(c)+r)+(\partial(c')-(\partial(c)+r))))+((\partial(c)+r)+(\partial(c')+r'))\approx (\partial(c_1)+(\partial(c)+r))+(\partial(c_1')+(\partial(c')+r')).
		\end{equation*}
		
		Now we take the composition of these sums
		\begin{equation*}
			\xi'((\partial(c)+r)+(\partial(c')+r'),c_1+(\partial(c)+r)\cdot c_1')\circ\eta(r+r',c+r\cdot c')=\chi(r+r',(c_1+(\partial(c)+r)\cdot c_1')+(c+r\cdot c')),
		\end{equation*}
		where $\chi$ is the special isomorphism
		\begin{alignat*}{2}
		&(\partial((c_1+(\partial(c)+r)\cdot c_1)+\partial(c+r\cdot c')))+(r+r')\\
		&	\phantom{aaaa}=((\partial(c_1)+((\partial(c)+r)+(\partial(c_1')-(\partial(c)+r))))+(\partial(c)+(r+(\partial(c')-r))))+(r+r')\\
		&\phantom{aaaa}\stackrel{\chi}{\approx} (\partial(c_1)+(\partial(c)+r))+(\partial(c_1')+(\partial(c')+r')).
		\end{alignat*}
		We have to compare the arrows $\xi(r+r',(c_1+c)+r\cdot(c_1'+c'))$ and $\chi(r+r',(c_1+(\partial(c)+r)\cdot c_1')+(c+r\cdot c'))$. We have weak special congruence
		\begin{equation*}
			(c_1+c)+r\cdot (c_1'+c')\sim (c_1+(\partial(c)+r)\cdot c_1')+(c+r\cdot c').
		\end{equation*}
		On the base of \textbf{Identification II} and the coherence for special isomorphisms we have the equality of these two arrows. 
		
		Now we show that for any object $(r,r')$ in $\mathbb{G}\times \mathbb{G}$, the identity arrow is carried to the identity arrow of the object $r+r'$ in $\mathbb{G}_0$. We have to show that
		\begin{equation}\label{eq:epsilon}
			\varepsilon(r,0)+\varepsilon'(r',0)=\varepsilon''(r+r',0)
		\end{equation}
		where $\varepsilon\colon \partial(0)+r\rightarrow r$, $\varepsilon'\colon \partial(0)+r'\rightarrow r'$ and $\varepsilon''\colon \partial(0)+(r+r')\rightarrow r+r'$ are the special isomorphisms. For the left-side we have 
		\begin{equation*}
			\varepsilon(r,0)+\varepsilon'(r',0)=\zeta(r+r',0+r\cdot0)
		\end{equation*}
		where $\zeta\colon \partial(0+r\cdot 0)+(r+r')\rightarrow r+r'$ is the special isomorphism. We have weak special congruence $0+r\cdot 0\sim 0$. By \textbf{Identification I} we obtain equality \eqref{eq:epsilon}.
		
		We have to show that special isomorphisms are natural in $\mathbb{G}$.
		
		We have a zero arrow in $\mathbb{G}$, $1_0=\sigma(0,0)$, where $\sigma\colon \partial(0)+0\approx 0$ is a special isomorphism. By Proposition \ref{prop:zeromorph} for any $\beta(r,c)\alpha\in\mathbb{G}_1$ we have $\beta(r,c)\alpha+1_0\approx \beta(r,c)\alpha$, from which the naturality of the special isomorphism $r+0\approx r$ follows as in the diagram
		\begin{equation*}
			\xymatrix@R=15mm{
				r+0 \ \approx \ r  \ar@<-4.5mm>[d]_-{\beta(r,c)\alpha+1_0} \ar@<7.3mm>[d]^-{\beta(r,c)\alpha}\\
				r'+0 \ \approx \ r' 
			}
		\end{equation*} 
		
		In a similar way, on the base of the definition of sum of arrows in $\mathbb{G}_1$, and associativity of the sum up to isomorphism (Proposition \ref{prop:assoc}) we obtain that the special isomorphism $(r_1+r_2)+r_3\approx r_1+(r_2+r_3)$ is natural. The same is true for the special isomorphism $r+(-r)\approx 0$, from the definition of opposite arrows in $\mathbb{G}_1$ and the isomorphism $\beta(r,c)\alpha+(-(\beta(r,c)\alpha))\approx\sigma(0,0)$ (Proposition \ref{prop:oppoarrow}). 
		
		At the end we note once more that $\mathbb{G}$ is a coherent categorical group. By the fact that we have noted several times in the proofs of the paper, special isomorphisms in $\mathbb{G}$ are special congruences in the corresponding c-group which satisfy coherence condition.
		
		It is worth to note that from the functorial property of addition in $\mathbb{G}$, the interchange law for the arrows in $\mathbb{G}_1$ follows.
	\end{proof}

\end{document}